\begin{document}

\UseSection  
\setcounter{tocdepth}{2}    

\begin{frontmatter}
  \title{Limit laws for self-loops and multiple edges\\in the configuration
    model} \runtitle{Self-loops and multiple edges in the CM}

\begin{aug}
\author{\fnms{Omer} \snm{Angel}
\ead[label=e1]{angel@math.ubc.ca}}

\address{Department of Mathematics,\\
University of British Columbia,\\
Vancouver, BC, V6T 1Z2, Canada
\printead{e1}}

\author{\fnms{Remco} \snm{van der Hofstad}
\ead[label=e2]{r.w.v.d.hofstad@tue.nl}}

\address{Department of Mathematics and
    Computer Science,\\ Eindhoven University of Technology, P.O.\ Box 513,\\
    5600 MB Eindhoven, The Netherlands.
\printead{e2}}

\author{\fnms{Cecilia} \snm{Holmgren} 
\ead[label=e3]{cecilia.holmgren@math.uu.se}}

\address{ Department of Mathematics,\\ Uppsala University, \\ SE-75310, Uppsala, Sweden.
\printead{e3}}

\runauthor{Angel, Van der Hofstad and Holmgren}

\affiliation{University of British Columbia, University of Eindhoven and
  Stockholm University}
\end{aug}

\begin{abstract}
  We consider self-loops and multiple edges in the configuration model as
  the size of the graph tends to infinity.  The interest in these random
  variables is due to the fact that the configuration model, conditioned on
  being simple, is a uniform random graph with prescribed degrees.
  Simplicity corresponds to the absence of self-loops and multiple edges.

  We show that the number of self-loops and multiple edges converges in
  distribution to two independent Poisson random variables when the second
  moment of the empirical degree distribution converges.  We also provide
  an estimate on the total variation distance between the number of
  self-loops and multiple edges and the Poisson limit of their sum.  This
  revisits previous works of Bollob\'as, of Janson, of Wormald and others.
  The error estimates also imply sharp asymptotics for the number of simple
  graphs with prescribed degrees.

  The error estimates follow from an application of Stein's method for
  Poisson convergence, which is a novel method for this problem.  The
  asymptotic independence of self-loops and multiple edges follows from a
  Poisson version of the Cram\'er-Wold device using thinning, which is of
  independent interest.

  When the degree distribution has infinite second moment, our general
  results break down.  We can, however, prove a central limit theorem for
  the number of self-loops, and for the multiple edges between vertices of
  degrees much smaller than the square root of the size of the graph, or
  when we truncate the degrees similarly.  Our results and proofs easily
  extend to directed and bipartite configuration models.
\end{abstract}

\begin{keyword}[class=AMS]
\kwd[Primary ]{60K35, 60K37, 82B43}
\end{keyword}

\begin{keyword}
\kwd{Configuration model, self-loops, multiple edges, Chen-Stein Poisson approximation}
\end{keyword}

\tableofcontents
\end{frontmatter}

\section{Introduction and motivation}

\subsection{Models and results}
\label{sec-mod}

We consider the configuration model $\CMnd$, with degrees
$\bfdit=(d_i)_{i\in[n]}$.  The configuration model (CM) is a random graph
with vertex set $[n]:=\{1,2,\ldots,n\}$ and with prescribed degrees.  Let
$\bfdit = (d_1,d_2,\ldots, d_n)$ be a given {\it degree sequence}, i.e., a
sequence of $n$ positive integers.  The total degree, denoted $\ell_n$ is 
\eqn{
  \ell_n = \sum_{i\in [n]} d_i,
}
and is assumed to be even.  The CM on $n$ vertices with degree sequence
$\bfdit$ is constructed as follows: start with $n$ vertices and $d_i$
half-edges adjacent to each vertex $i \in [n]$.  The $\ell_n$ half-edges
are matched in pairs in a uniformly random manner to form the edges of the
graph.

Algorithmically, the CM may be sampled as follows.  Randomly choose a pair
of half-edges, match the chosen pair together to form an edge and remove
the two half-edges.  Continue until all half-edges are paired.  We denote
the resulting graph on $[n]$ by $\CMnd$, with corresponding edge set
$\EE_n$.  Although self-loops may occur due to the pairing of half-edges
that are incident to the same vertex, in many cases these become rare
compared to the total degree as $n\to\infty$ (see e.g.\ \cite{Boll01,
  Jans06b}).  The same applies to multiple edges.  We say that $\CMnd$ is
{\em simple} when it has no self-loops nor multiple edges.

\medskip

In this paper, we investigate limit laws for the number of self-loops and
multiple edges.  Specifically, we study the random vector $(S_n,M_n)$,
which is defined as
        \eqan{
	\label{Sn-Mn-def}
	S_n &= \sum_{i\in [n]} X_{ii},
	&
	M_n &= \sum_{1\leq i<j\leq n} \binom{X_{ij}}{2}.
        }
Here, for $i,j\in[n]$, $X_{ij}$ denotes the number of edges between
vertices $i$ and $j$.  For clarity, note that we have
        \eqn{
	d_i = X_{ii} + \sum_{j\in[n]} X_{ij} = 2X_{ii} + \sum_{j\neq i} X_{ij}.
        }
We note that $M_n$ is not precisely equal to the number of multiple edges. This number instead may be written as
\eqn{ 
        \label{number-multiple-edges}
        \widetilde M_n =\sum_{1\leq i<j\leq n} (X_{ij}-1)_+,
}
where $x_+ = \max\{0,x\}$.  However, $M_n=0$ precisely when there are no
self-loops, i.e., when $\widetilde M_n=0$.  
Moreover, if $X_{ij}=2$ then the pair $i,j$ contributes $1$ to $M_n$, so $M_n=\widetilde M_n$ in
the absence of triple edges between vertices.

\medskip

Furthermore, we let $\lambdanS$ and $\lambdanM$ be the means of the random variables $S_n$ and $M_n$, i.e.,
\eqan{
	\label{lambda-defs}
	\lambdanS &= \expec[S_n],  & \lambdanM &= \expec[M_n]. \\
\intertext{We can compute that}
	\lambdanS &= \frac{\sum_{i\in[n]} d_i(d_i-1)}{2(\ell_n-1)},
	&
	\lambdanM &= \frac{\sum_{1\leq i<j\leq n} d_j(d_j-1)d_i(d_i-1)}{2(\ell_n-1)(\ell_n-3)}.
	}
The calculation of $ \lambdanS $ follows since the probability of a
connection between any two half-edges is $1/(\ell_n-1)$ and there are
$ \binom{d_i}{2} $ choices for the two half-edges that will form a
self-loop incident to the vertex $i$.  The calculation of $ \lambdanM $
follows since the probability for any two half-edges incident to the vertex
$ i $ to connect (in order) to any two half-edges incident to the vertex
$ j $ is $1/[(\ell_n-1)(\ell_n-3)]$.  Further, there are $\binom{d_i}{2} $
choices for the two-half edges incident to $ i $ and $\binom{d_j}{2} $
choices for the two-half edges incident to $ j $.  Finally, there are two
possible pairings of the two chosen half-edges incident to $ i $ to the two
chosen half-edges incident to $j$.
	
Throughout the paper, we write $f(n)=o(g(n))$ as $n\rightarrow \infty$ when $g(n)>0$ and $\lim_{n\rightarrow \infty}|f(n)|/g(n)=0$. We write $f(n)=O(g(n))$ as $n\rightarrow \infty$ when $g(n)>0$ and $\limsup_{n\rightarrow \infty}|f(n)|/g(n)<\infty$. Finally, we write $f(n)=\Theta(g(n))$ as $n\rightarrow \infty$ when $f(n)=O(g(n))$ and $g(n)=O(f(n))$.

In many cases and using this notation, we will approximate
	\eqan{
	\lambdanS &=(\nu_n/2)(1+O(1/n)),
	&
	\lambdanM &=(\nu_n^2/4) (1+O(1/n))-\chi_n, 
	}
where
	\eqan{
	\label{nun-def}
	\nu_n &=\frac{\sum_{i\in[n]} d_i(d_i-1)}{\ell_n},
	&
	\chi_n &=\frac{\sum_{i\in[n]} [d_i(d_i-1)]^2}{4(\ell_n-1)(\ell_n-3)}.
	}
For future purposes, we also define
	\eqn{
	\mu_n^{\sss(r)}=\frac{\sum_{i\in[n]} (d_i)_r}{\ell_n},
	}
where, for an integer $m$, we let $(m)_r=m(m-1)\cdots (m-r+1)$ denote the
$r$th factorial moment. (In particular, $\nu_n=\mu^{\sss(2)}_n$.)  We write
$\Law(X)$ for the distribution of $X$, and we write $(a \vee b)$ to denote
the maximum of $a$ and $b$. Our main result is as follows:

\begin{theorem}[Poisson approximation of self-loops and cycles]
\label{thm-PA-general}
For $\CMnd$, there exists a universal constant $C>0$ such that
	\eqn{
	\label{thm-1}
	\dtv{\Law(S_n)-\Po(\lambdanS)}\leq \frac{C}{(\nu_n/2\vee 1)}\frac{\nu_n^2}{\ell_n},
	}
	\eqn{
	\label{thm-2}
	\dtv{\Law(M_n)-\Po(\lambdanM)}\leq \frac{C}{(\lambdanM\vee 1)}\frac{(\mu_n^{\sss(3)})^2+\nu_n^4}{\ell_n},
	}
and
	\eqn{
	\label{thm-3}
	\dtv{\Law(S_n+M_n)-\Po(\lambdanS+\lambdanM)}\leq \frac{C}{((\lambdanS+\lambdanM)\vee 1)}\frac{(\mu_n^{\sss(3)})^2+\nu_n^4}{\ell_n}.
	}
In particular,
	\eqn{
	\label{thm-4}
	\prob(\CMnd\text{ \rm{simple}})=\prob(S_n+M_n=0)=\e^{-\lambdanS-\lambdanM}+r_n,
	\qquad
	\text{where}
	\qquad
	|r_n|\leq \frac{C}{((\lambdanS+\lambdanM)\vee 1)}\frac{(\mu_n^{\sss(3)})^2+\nu_n^4}{\ell_n}.
	}
\end{theorem}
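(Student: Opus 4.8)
The plan is to obtain \eqref{thm-1}--\eqref{thm-3} from one tool — the Chen--Stein method for Poisson approximation — applied in turn to $S_n$, $M_n$ and $S_n+M_n$, and then to read \eqref{thm-4} off \eqref{thm-3}. The first step is to represent each statistic as a sum of Bernoulli indicators indexed by the \emph{potential} defects of the random pairing. Let $I_{\mathrm S}$ be the set of unordered pairs $\{s,t\}$ of half-edges at a common vertex, and let $X_{\{s,t\}}$ indicate that $s$ and $t$ are paired together, so $S_n=\sum_{\alpha\in I_{\mathrm S}}X_\alpha$ with $\expec[X_\alpha]=1/(\ell_n-1)$. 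Let $I_{\mathrm M}$ record two half-edges at a vertex $i$, two at a vertex $j\neq i$, and one of the two cross-pairings of the four, and let $Y_\gamma$ indicate that both prescribed edges occur; then $M_n=\sum_{\gamma\in I_{\mathrm M}}Y_\gamma$, because every pair of parallel $i$--$j$ edges is recorded by exactly one $\gamma$, and $\expec[Y_\gamma]=1/[(\ell_n-1)(\ell_n-3)]$. Summing these means reproduces the formulas for $\lambdanS$ and $\lambdanM$ in \eqref{lambda-defs}, and the Chen--Stein prefactor $1\wedge\lambda^{-1}$ is precisely the factor $1/(\lambdanS\vee1)$, $1/(\lambdanM\vee1)$, $1/((\lambdanS+\lambdanM)\vee1)$ in the statement, once one uses $\lambdanS=(\nu_n/2)(1+O(1/n))$.

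I would run the local form of the bound with dependency neighbourhood $B_\alpha$ equal to the indices sharing at least one half-edge with $\alpha$, so that the total-variation distance is controlled by the usual sums $b_1,b_2,b_3$. The local terms $b_1,b_2$ are combinatorial. For self-loops any two distinct indices sharing a half-edge are mutually exclusive — the shared half-edge cannot be paired two ways — so their contribution to $b_2$ vanishes; for multiple edges this exclusivity fails only through triple (or higher) edges, when two indices share one prescribed edge but differ in the other, and the resulting $b_2$ is a sum over configurations occupying three half-edges at each of two vertices, which I expect to evaluate to the $(\mu_n^{\sss(3)})^2/\ell_n$ term. The term $b_1$, a sum of products of single-index means over shared half-edges, likewise reduces to factorial moments of the degrees divided by $\ell_n$.

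The delicate ingredient, and the one I expect to be the main obstacle, is the long-range term $b_3$: the global constraint that \emph{all} half-edges be paired makes a defect indicator genuinely dependent on far-away indicators, so unlike in an independent model $b_3\neq0$. Rather than bound the conditional expectation directly I would use a coupling, conditioning a potential defect $\alpha$ to be present — which is the same as deleting its two (resp.\ four) half-edges and running $\CMnd$ on the $\ell_n-2$ (resp.\ $\ell_n-4$) remaining ones — and coupling the unconditioned and conditioned pairings by re-pairing only the freed half-edges. Since deleting $O(1)$ half-edges from a uniform pairing of $\ell_n$ perturbs each pair-connection probability by $O(1/\ell_n)$, the expected number of other defects the coupling creates or destroys is $O(\lambda/\ell_n)$ per index, and summing against the weights $p_\alpha$ (whose total is $\lambda$) yields a term of order $\lambda^2/\ell_n$; this is exactly the dominant $\nu_n^2/\ell_n$ for $S_n$ and $\nu_n^4/\ell_n$ for $M_n$, and it is where the $1/\ell_n$ decay of the whole estimate comes from.

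Finally, \eqref{thm-3} follows by rerunning the identical argument on the merged index set $I_{\mathrm S}\sqcup I_{\mathrm M}$ with $W=S_n+M_n$ and $\lambda=\lambdanS+\lambdanM$; a self-loop index and a multiple-edge index sharing a half-edge are again mutually exclusive, so no genuinely new interactions appear and the bound is again of order $((\mu_n^{\sss(3)})^2+\nu_n^4)/\ell_n$. The simplicity estimate \eqref{thm-4} is then immediate, since $\{\CMnd\text{ simple}\}=\{S_n+M_n=0\}$: evaluating both laws in \eqref{thm-3} at the atom $0$ gives $|\prob(S_n+M_n=0)-\e^{-\lambdanS-\lambdanM}|\le\dtv{\Law(S_n+M_n)-\Po(\lambdanS+\lambdanM)}$, which is the claimed bound on $|r_n|$. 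Beyond the coupling estimate, I anticipate the fussiest bookkeeping to be on the multiple-edge side — separating the honest double edges counted by $M_n$ from triple-and-higher edges, and handling the two distinct vertices defining each $\gamma$ — and checking that the constant assembled from all these sums can be taken independent of the degree sequence.
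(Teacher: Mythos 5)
Your proposal is, in substance, the paper's own proof. It uses the same indicator decomposition of $S_n$ and $M_n$ over half-edge configurations (the paper's \eqref{Sn-Mn-def-rep}), the same success probabilities and means, and the same central device: Stein--Chen Poisson approximation in which the law conditioned on a defect $\alpha$ is realized by a rewiring coupling (free the half-edges of $\alpha$, force $\alpha$, re-pair the freed half-edges), with the error assembled as $\sum_\alpha\sum_{\beta\neq\alpha}p_\alpha\expec[|I_\beta-J_{\beta\alpha}|]$. Your $b_1,b_2,b_3$ packaging, with the coupling reserved for the long-range term, is only a cosmetic variant of the paper's single bound (Theorem \ref{thm-PA-sums}, from Barbour--Holst--Janson): your $b_1$/$b_2$ computations correspond to its incompatible and edge-sharing pairs (cases (a3), (d3)--(d4)), your coupling term to its creation-by-rewiring analysis (cases (a1)--(a2), (b), (d1)--(d2)), and the dominant terms you identify --- $\nu_n^2/\ell_n$ and $\nu_n^4/\ell_n$ from creation, $(\mu_n^{\sss(3)})^2/\ell_n$ from pairs of parallel-edge indices sharing one edge --- are exactly the paper's.

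One caveat, which you share with the paper rather than diverge from it: the final absorption of the near-field terms into the stated bounds is not justified for arbitrary degree sequences. For \eqref{thm-1}, your $b_1$ together with the diagonal $\sum_\alpha p_\alpha^2$ (the paper's case (a3) plus its first term in \eqref{dtv-bd}) is of order $(\nu_n+\mu_n^{\sss(3)})/\ell_n$, and concluding \eqref{thm-1} as stated requires $\nu_n+\mu_n^{\sss(3)}=O(\nu_n^2)$, which fails in general: with $\sqrt{n}$ vertices of degree $3$ and the rest of degree $1$ one has $\mu_n^{\sss(3)}/\ell_n\asymp n^{-3/2}$ while $\nu_n^2/\ell_n\asymp n^{-2}$ (and an inclusion--exclusion computation shows $|\prob(S_n=0)-\e^{-\lambdanS}|$ is genuinely of order $n^{-3/2}$ there); with a single vertex of degree $n^{2/3}$ among degree-one vertices, $\mu_n^{\sss(3)}\asymp n\gg\nu_n^2\asymp n^{2/3}$. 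So what your sketch actually establishes is \eqref{thm-1} with $\nu_n^2$ replaced by $\nu_n+\nu_n^2+\mu_n^{\sss(3)}$, and analogous extra terms ($\nu_n^2$, $\nu_n^3$, $\mu_n^{\sss(3)}\nu_n$, \dots) arise in \eqref{thm-2}--\eqref{thm-3} when $\nu_n$ is small. The paper's concluding paragraphs perform exactly the same silent absorption (``case (a) \dots is $O(\nu_n^2/\ell_n)$'' and ``$O(\nu_n/\ell_n)=O(\nu_n^2/\ell_n)$''), so on this point you have faithfully reproduced the paper's argument, gap included; but be aware that, taken literally, neither your sketch nor the paper's write-up yields the displayed inequalities without an additional assumption such as $\nu_n$ bounded below and $\mu_n^{\sss(3)}=O(\nu_n^2)$.
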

\medskip

Let us discuss some of the history of this problem.  The configuration
model was introduced by Bollob\'as in \cite{Boll80b} to count the number of
regular graphs, and provides a very nice example of the {\em probabilistic
  method} (see also \cite{AloSpe00}).  Subsequently, the configuration
model has been used successfully to analyze many properties of random
regular graphs.  The number of simple graphs can be rather directly
obtained from the probability of simplicity of $\CMnd$ (see e.g.,
\cite[Proposition 7.6]{Hofs17}).  The introduction of the configuration
model was inspired by, and generalized the results in, the work of Bender
and Canfield \cite{BenCan78}.  See also Wormald \cite{Worm81a} and McKay
and Wormald \cite{MckWor91} for previous work. Before continuing with the history of Theorem \ref{thm-PA-general}, we discuss its implications on the number of
simple graphs with a prescribed degree sequence:

\begin{corollary}[Number of simple graphs with prescribed degrees]
\label{cor-PA-general}
The number $N_n(\bfdit)$ of simple graphs with degrees $\bfdit=(d_i)_{i\in[n]}$ satisfies
	\eqn{
	N_n(\bfdit)=\big(\e^{-\lambdanS-\lambdanM}+r_n\big) \frac{(\ell_n-1)!!}{\prod_{i\in[n]} d_i!},
	\qquad
	\text{where}
	\qquad
	|r_n|\leq \frac{C}{((\lambdanS+\lambdanM)\vee 1)}\frac{(\mu_n^{\sss(3)})^2+\nu_n^4}{\ell_n}.
	}
In particular, the number $N_n(r)$ of $r$-regular graphs with $n$ vertices satisfies, when $rn$ is even,
	\eqn{
	N_n(r)=\big(\e^{-(r-1)/2-(r-1)^2/4}+O(1/n)\big) \frac{(rn-1)!!}{(r!)^n}.
	}
\end{corollary}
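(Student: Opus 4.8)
The plan is to convert the probabilistic estimate of Theorem~\ref{thm-PA-general} into the exact count $N_n(\bfdit)$ via the standard correspondence between configurations and simple graphs (compare \cite[Proposition 7.6]{Hofs17}). Recall that $\CMnd$ is a uniformly chosen pairing of the $\ell_n$ half-edges, of which there are $(\ell_n-1)!!$. I would first argue that, for a \emph{simple} graph $G$ with degree sequence $\bfdit$, exactly $\prod_{i\in[n]} d_i!$ of these pairings project onto $G$: at each vertex $i$ the $d_i$ labelled half-edges can be matched bijectively to the $d_i$ \emph{distinct} edges at $i$, and because $G$ has neither self-loops nor multiple edges these choices are independent across vertices and yield distinct pairings. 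Hence $\prob(\CMnd=G)=\prod_{i}d_i!/(\ell_n-1)!!$ for each simple $G$, and summing over all such graphs gives
\eqn{
\prob(\CMnd\text{ simple})=N_n(\bfdit)\,\frac{\prod_{i\in[n]} d_i!}{(\ell_n-1)!!}.
}
Solving for $N_n(\bfdit)$ and inserting the estimate \eqref{thm-4} for $\prob(\CMnd\text{ simple})=\prob(S_n+M_n=0)$ reproduces the first display of the corollary verbatim, with the same remainder $r_n$ and the same bound on $|r_n|$.

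For the $r$-regular specialization I would set $d_i\equiv r$, so that $\ell_n=rn$ and $\prod_i d_i!=(r!)^n$, giving the prefactor $(rn-1)!!/(r!)^n$. It then remains to evaluate the exponent and control the error. Here $\nu_n=r-1$ exactly, so $\lambdanS=(r-1)/2+O(1/n)$ by the approximation of $\lambdanS$; similarly $\mu_n^{\sss(3)}=(r-1)(r-2)$ and $\chi_n=O(1/n)$, whence $\lambdanM=(r-1)^2/4+O(1/n)$. Adding these, $\lambdanS+\lambdanM=(r-1)/2+(r-1)^2/4+O(1/n)$, and since $\e^{O(1/n)}=1+O(1/n)$ the exponential factor equals $\e^{-(r-1)/2-(r-1)^2/4}(1+O(1/n))$. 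For the remainder, $(\mu_n^{\sss(3)})^2+\nu_n^4$ is bounded and $(\lambdanS+\lambdanM)\vee 1=\Theta(1)$ for fixed $r$, so \eqref{thm-4} yields $|r_n|=O(1/\ell_n)=O(1/n)$; absorbing $r_n$ and the factor $(1+O(1/n))$ into a single additive $O(1/n)$ correction to the constant completes the argument.

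The only step needing genuine care, as opposed to routine substitution, is the combinatorial identity counting pairings that project onto a fixed graph. For a graph with a self-loop or a repeated edge the naive factor $\prod_i d_i!$ overcounts, since permuting the two half-edges of a self-loop or interchanging parallel edges leaves the pairing unchanged; restricting to \emph{simple} $G$ is exactly what eliminates these internal symmetries and makes the count equal to $\prod_i d_i!$. Everything else follows directly from Theorem~\ref{thm-PA-general} and elementary asymptotics.
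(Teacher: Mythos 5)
Your proposal is correct and follows essentially the same route as the paper: the paper simply cites \cite[Proposition 7.6]{Hofs17} for the identity $\prob(\CMnd\text{ simple})=N_n(\bfdit)\prod_{i\in[n]} d_i!/(\ell_n-1)!!$ and combines it with \eqref{thm-4}, exactly as you do. The only differences are that you re-derive that counting identity from first principles (correctly, including the observation that simplicity is what makes the factor $\prod_i d_i!$ exact) and that you spell out the routine $r$-regular specialization, both of which the paper leaves implicit.
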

      
The proof of Corollary \ref{cor-PA-general} follows directly from
\cite[Proposition 7.6]{Hofs17}, which implies that
	\eqn{
	\prob(\CMnd\text{ \rm{simple}})= N_n(\bfdit)\frac{\prod_{i\in[n]} d_i!}{(\ell_n-1)!!}.
	}

Let us continue the discussion of the history of the configuration model and Theorem \ref{thm-PA-general}. 
The configuration model, as well as uniform random graphs with a prescribed
degree sequence, were studied in greater
generality by Molloy and Reed in \cite{MolRee95, MolRee98}, where they
focus on the existence of a giant component.  The Poisson approximation for
the number of self-loops and multiple edges was first employed by
Bollob\'as \cite{Boll01} in the case of random regular graphs.  Janson
\cite{Jans06b} uses a Poisson approximation relying on the method of
moments for the number of vertices having self-loops and the pairs of
vertices having multiple edges between them.  He investigates the case where
the second moment of the degrees remains uniformly bounded, but not necessarily being uniformly integrable. In
\cite{Jans13a}, Janson revisits the problem for $S_n$ and $M_n$ in
\eqref{Sn-Mn-def} and uses the method of moments as well on the boundary
case where the maximal degree is of order $\sqrt{n}$.  Similar results were
proved previously in earlier versions of \cite{Hofs17}.  Janson's extension
in \cite{Jans13a} is inspired by the wish to deal with multiple edges and
self-loops for SIR epidemics on the configuration model in joint work with
Luczak and Windridge \cite{JanLucWin14}.

In contrast to the works above based on the moment method, we use a Poisson
approximation with couplings based on Stein's method, which also allows us to
give error estimates.  This method was recently used by Holmgren and Janson
\cite{HolJan14,HolJan15} to investigate the number of fringe trees in certain random trees. A major advantage is that Stein's 
method makes the approximation quantitative by giving explicit bounds on the error terms.
Contrary to Janson \cite{Jans06b,Jans13a}, our results do not allow for degrees that are of the order of $\sqrt{n}$.

\subsection{Regularity and moment assumptions on vertex degrees}
\label{sec-rvd}
\label{sec-fin-var}


We next investigate special cases of Theorem \ref{thm-PA-general}, under
stronger assumptions (on regularity and moments) on the degree distribution.

Let us now describe our regularity assumptions on the degree sequence
$\bfdit$ as $n\to\infty$. We denote the degree of a uniformly chosen vertex
$V$ in $[n]$ by $D_n=d_{\sss V}$.  The random variable $D_n$ has
distribution function $F_n$ given by
\eqn{
    \label{def-Fn-CM}
    F_n(x)=\frac{1}{n} \sum_{j\in [n]} \indic{d_j\leq x},
  }
where $\indicwo{A}$ denotes the indicator of the event $A$.    
Our regularity condition is as follows:

\begin{condition}[Regularity conditions for vertex degrees]
\label{cond-degrees-regcond}
~ The random variables $D_n$ converge in distribution to some random
variable $D$, and $\E[D_n] \to \E[D] < \infty$.
\end{condition}
  


Define 
	\eqn{
	\label{nu-def}
	\nu=\frac{\expec[D(D-1)]}{\expec[D]}.
	}
Under suitable assumptions on the second moment of $D_n$, we can deduce
more precise information about $S_n$ and $M_n$, and in particular consider
their joint distribution.  Our main results in the finite-variance case are
the following three theorems:

\begin{theorem}[Poisson approximation of self-loops and cycles]
\label{thm-PA-fin-var}
For $\CMnd$, where $\bfdit$ satisfies the Degree Regularity Condition
\ref{cond-degrees-regcond} and $\lim_{n\to\infty} \expec[D_n^2] =
\expec[D^2]<\infty$, it holds that
	\eqn{
	\dtv{\Law(S_n,M_n)-\Po(\nu/2)\otimes \Po(\nu^2/4)}\to 0.
	}
\end{theorem}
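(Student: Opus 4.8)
The plan is to read off joint convergence from the quantitative marginal bounds of Theorem \ref{thm-PA-general}, and then to remove the residual dependence with a Poisson version of the Cram\'er--Wold device based on thinning. Throughout write $\alpha=\nu/2$ and $\beta=\nu^2/4$ for the target parameters.

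First I would check that all parameters and error terms behave correctly under the hypotheses. Since $\nu_n=\E[D_n(D_n-1)]/\E[D_n]$ and $\E[D_n]\to\E[D]$, $\E[D_n^2]\to\E[D^2]$, one has $\nu_n\to\nu$. The key auxiliary fact is $\dmax=o(\sqrt n)$: convergence $D_n\convd D$ together with $\E[D_n^2]\to\E[D^2]<\infty$ makes $\{D_n^2\}$ uniformly integrable, and whenever $\dmax>K$ we have $\dmax^2/n\le \tfrac1n\sum_{i:d_i>K}d_i^2=\E[D_n^2\indic{D_n>K}]$, which is small uniformly in $n$. This gives $\chi_n=O(\dmax^2/n)\to0$, so that $\lambdanS=(\nu_n/2)(1+O(1/n))\to\alpha$ and $\lambdanM=(\nu_n^2/4)(1+O(1/n))-\chi_n\to\beta$. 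Moreover $\ell_n=n\E[D_n]=\Theta(n)\to\infty$, so the right-hand side of \eqref{thm-1} is $O(\nu_n^2/\ell_n)\to0$; the only delicate term in \eqref{thm-2} and \eqref{thm-3} is $(\mu^{\sss(3)}_n)^2/\ell_n$, but $\mu^{\sss(3)}_n=\tfrac1{\ell_n}\sum_i(d_i)_3\le(\dmax-2)\nu_n=O(\dmax)$, whence $(\mu^{\sss(3)}_n)^2/\ell_n=O(\dmax^2/n)\to0$ as well. Feeding this back into Theorem \ref{thm-PA-general} yields $S_n\convd\Po(\alpha)$, $M_n\convd\Po(\beta)$ and $S_n+M_n\convd\Po(\alpha+\beta)$.

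The crux is to upgrade these three statements to joint convergence with independence, and they do \emph{not} suffice on their own: the joint probability generating function $G_n(z_1,z_2)=\E[z_1^{S_n}z_2^{M_n}]$ is pinned down by them only along the two axes and the diagonal of $[0,1]^2$, leaving room for a dependent limit. This is where the thinning device enters. For $t_1,t_2\in[0,1]$ let $S_n^{t_1},M_n^{t_2}$ be independent thinnings of $S_n,M_n$; a direct computation gives, for $s\in[0,1]$, $\E[s^{S_n^{t_1}+M_n^{t_2}}]=G_n(1-t_1(1-s),\,1-t_2(1-s))$. As $(t_1,t_2,s)$ range over $[0,1]^3$ the pair $(1-t_1(1-s),1-t_2(1-s))$ covers all of $[0,1]^2$ (take $s=\min(z_1,z_2)$ and $t_i=(1-z_i)/(1-s)$), so convergence of every thinned sum to its Poisson limit recovers the full limiting joint pgf; and since $S_n^{t_1}+M_n^{t_2}\convd\Po(t_1\alpha+t_2\beta)$ translates into $\lim_nG_n(z_1,z_2)=\e^{\alpha(z_1-1)+\beta(z_2-1)}$, this is exactly the independent-Poisson pgf. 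It therefore suffices to establish $S_n^{t_1}+M_n^{t_2}\convd\Po(t_1\alpha+t_2\beta)$ for each fixed $t_1,t_2$.

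For this last point I would observe that a thinning of a sum of rare, weakly dependent indicators is again a sum of rare, weakly dependent indicators, so the same Stein-method Poisson approximation underlying Theorem \ref{thm-PA-general} applies to $S_n^{t_1}+M_n^{t_2}$, with error terms dominated by the unthinned ones already shown to vanish, and with mean converging to $t_1\alpha+t_2\beta$ by the parameter computation above. I expect this step---formulating the thinned Stein estimate cleanly and verifying that its error is controlled by the unthinned bound uniformly in $(t_1,t_2)$---to be the main obstacle; once it is in place, the parameter and error-term estimates of the first two paragraphs are routine consequences of $\dmax=o(\sqrt n)$.
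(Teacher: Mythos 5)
Your proposal is correct and is essentially the paper's own proof: the paper applies the Stein estimates of Section \ref{sec-couplings} to the thinned variable $W$ in \eqref{CW-variable} (those coupling computations are explicitly noted to be valid for arbitrary thinning variables $K_\alpha$, since these are independent of the pairing, which resolves the step you flag as the main obstacle exactly as you predict), and then invokes the Poisson Cram\'er--Wold device of Theorem \ref{thm-PCW} and Corollary \ref{cor-PCW}, which is your probability-generating-function thinning argument phrased with moment generating functions plus a tightness/subsequence step. The only detail you leave implicit is the final passage from convergence in distribution to convergence in total variation, which the paper notes holds because the random vectors are integer-valued.
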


To prove Theorem \ref{thm-PA-fin-var}, we introduce a Cram\'er-Wold device for
Poisson random variables, that guarantees the independence of the limiting
random variables (see Section \ref{sec-CWD}), and that is of independent
interest.

Our methods also yield some speed of convergence results:

\begin{theorem}[Speed of convergence for self-loops and cycles under finite
  third moment]
\label{thm-PA-fin-var-b}
For $\CMnd$, where $\bfdit$ satisfies the Degree Regularity Condition \ref{cond-degrees-regcond} and $\lim_{n\rightarrow \infty}\expec[D_n^3]=\expec[D^3]<\infty$, it holds that
	\eqn{
	\label{Sn-Mn-Pois-a}
	\prob(S_n=0)=\e^{-\lambdanS}+O(1/n),
	\qquad
	\prob(M_n=0)=\e^{-\lambdanM}+O(1/n),
	}
and
	\eqn{
	\label{Sn-Mn-Pois-b}
	\prob(S_n=M_n=0)=\e^{-\lambdanS-\lambdanM}+O(1/n).
	}
In particular,
	\eqn{
	\label{Sn-Mn-Pois-c}
	\prob(\CMnd\text{ \rm{simple}})=\e^{-\lambdanS-\lambdanM}+O(1/n).
	}
Furthermore, when also $\lim_{n\rightarrow \infty}\expec[D_n^4]=\expec[D^4]<\infty$, $\lambdanS=\nu_n/2$ and $\lambdanM=\nu_n^2/4+O(1/n)$, so that \eqref{Sn-Mn-Pois-a}-\eqref{Sn-Mn-Pois-c} hold with
$\lambdanS$ and $\lambdanM$ replaced with $\nu_n/2$ and $\nu_n^2/4$.
\end{theorem}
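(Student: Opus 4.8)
The plan is to read off all four displays directly from the quantitative estimates of Theorem \ref{thm-PA-general}, after checking that the moment hypotheses force every error term there to be of order $1/n$. First I would pin down the orders of magnitude of the quantities entering \eqref{thm-1}--\eqref{thm-4}. Since the $d_i$ are positive integers, $\expec[D_n]\to\expec[D]\in[1,\infty)$ under Condition \ref{cond-degrees-regcond}, and $\ell_n=n\expec[D_n]$ gives $\ell_n=\Theta(n)$. Writing $\nu_n=(\expec[D_n^2]-\expec[D_n])/\expec[D_n]$ and $\mu_n^{\sss(3)}=\expec[(D_n)_3]/\expec[D_n]$, the convergence $\expec[D_n^3]\to\expec[D^3]<\infty$ (which also forces the lower moments to converge) shows that $\nu_n=O(1)$ and $\mu_n^{\sss(3)}=O(1)$.

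Second, I substitute these orders into Theorem \ref{thm-PA-general}. Each right-hand side of \eqref{thm-1}--\eqref{thm-4} carries a factor $1/\ell_n=O(1/n)$, a numerator $(\mu_n^{\sss(3)})^2+\nu_n^4=O(1)$ (respectively $\nu_n^2=O(1)$), and a denominator of the form $(\,\cdot\vee 1)\ge 1$; hence each total-variation distance, as well as the remainder $r_n$ in \eqref{thm-4}, is $O(1/n)$.

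Third, I turn the total-variation bounds into the point-mass statements. For any nonnegative integer-valued $X$ and any $\lambda\ge 0$ one has $|\prob(X=0)-\e^{-\lambda}|=|\prob(X=0)-\prob(\Po(\lambda)=0)|\le\dtv{\Law(X)-\Po(\lambda)}$, and applying this with $X=S_n$ and $X=M_n$ produces \eqref{Sn-Mn-Pois-a}. For \eqref{Sn-Mn-Pois-b} I use that $S_n,M_n\ge 0$, so $\{S_n=M_n=0\}=\{S_n+M_n=0\}$ and the claim is exactly \eqref{thm-4}; and \eqref{Sn-Mn-Pois-c} follows since being simple is precisely the event $\{S_n=0\}\cap\{\widetilde M_n=0\}=\{S_n=M_n=0\}$, using that $M_n=0$ iff $\widetilde M_n=0$ as noted after \eqref{number-multiple-edges}.

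Finally, for the last assertion I would compare $\lambdanS,\lambdanM$ with $\nu_n/2,\nu_n^2/4$. A direct computation gives $\lambdanS-\nu_n/2=\nu_n/[2(\ell_n-1)]=O(1/n)$, while $\lambdanM=(\nu_n^2/4)(1+O(1/n))-\chi_n$ with $\chi_n$ as in \eqref{nun-def}; here $\chi_n=O(1/n)$ because $\sum_{i\in[n]}[d_i(d_i-1)]^2=n\expec[(D_n(D_n-1))^2]=O(n)$, and this bound is exactly where the finite-fourth-moment hypothesis is used. Since $\lambdanS$ and $\lambdanM$ stay bounded, $|\e^{-\lambdanS}-\e^{-\nu_n/2}|=O(1/n)$ and $|\e^{-\lambdanM}-\e^{-\nu_n^2/4}|=O(1/n)$ (and likewise for the sum), so replacing the exact means by $\nu_n/2$ and $\nu_n^2/4$ preserves the $O(1/n)$ error. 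The main point requiring care is this last step: the substantive work is carried by Theorem \ref{thm-PA-general}, and the only genuine subtlety here is recognizing that the extra fourth-moment assumption serves solely to guarantee $\chi_n=O(1/n)$ and that the mean-substitution is harmless at the $O(1/n)$ scale.
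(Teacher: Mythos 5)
Your proposal is correct and follows essentially the same route as the paper: the paper's proof simply notes that under the stated hypotheses $\nu_n\rightarrow\nu$ and $\mu_n^{\sss(3)}\rightarrow\mu^{\sss(3)}=\expec[(D)_3]/\expec[D]$ are finite, so that with $\ell_n=\Theta(n)$ all error bounds in Theorem \ref{thm-PA-general} become $O(1/n)$. Your write-up merely fills in the details the paper leaves implicit (the total-variation-to-point-mass step, the identity $\{S_n=M_n=0\}=\{S_n+M_n=0\}$, and the observation that the fourth-moment assumption is used exactly to guarantee $\chi_n=O(1/n)$ so that the substitution of $\nu_n/2$ and $\nu_n^2/4$ for $\lambdanS$ and $\lambdanM$ is harmless), all of which are accurate.
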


\medskip

\begin{theorem}[Speed of convergence for self-loops and cycles with infinite third moment]
\label{thm-PA-fin-var-c}
For $\CMnd$, where $\bfdit$ satisfies the Degree Regularity Condition \ref{cond-degrees-regcond} and $\lim_{n\rightarrow \infty}\expec[D_n^2]=\expec[D^2]<\infty$, it holds that
	\eqn{
	\label{Sn-Mn-Pois-a2}
	\prob(S_n=0)=\e^{-\lambdanS}+O(1/n),
	\qquad
	\prob(M_n=0)=\e^{-\lambdanM}+O(\dmax^2/n),
	}
and
	\eqn{
	\label{Sn-Mn-Pois-b2}
	\prob(S_n=M_n=0)=\e^{-\lambdanS-\lambdanM}+O(\dmax^2/n).
	}
In particular,
	\eqn{
	\label{Sn-Mn-Pois-c2}
	\prob(\CMnd\text{ \rm{simple}})=\e^{-\lambdanS-\lambdanM}+O(\dmax^2/n).
	}
\end{theorem}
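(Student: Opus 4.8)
The plan is to derive all four statements directly from the total variation bounds of Theorem~\ref{thm-PA-general}, using only that total variation distance dominates the discrepancy of any single event. Concretely, since $\{S_n=0\}$, $\{M_n=0\}$ and $\{S_n+M_n=0\}$ are events and $\Po(\lambda)(\{0\})=\e^{-\lambda}$, one has
\[
\big|\prob(S_n=0)-\e^{-\lambdanS}\big|\leq \dtv{\Law(S_n)-\Po(\lambdanS)},
\]
and similarly for $M_n$ and for $S_n+M_n$. Moreover $S_n,M_n\geq 0$ forces $\{S_n=M_n=0\}=\{S_n+M_n=0\}$, and $\CMnd$ is simple precisely on this event, so the last two displays of the theorem reduce to the bound for $S_n+M_n$. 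Thus the whole proof amounts to showing that, under the stated moment hypotheses, the three error terms on the right-hand sides of \eqref{thm-1}--\eqref{thm-3} are $O(1/n)$, $O(\dmax^2/n)$ and $O(\dmax^2/n)$ respectively.

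The first observation is that the Degree Regularity Condition~\ref{cond-degrees-regcond} together with $\expec[D_n^2]\to\expec[D^2]<\infty$ gives $\ell_n=n\expec[D_n]=\Theta(n)$ and $\nu_n=(\expec[D_n^2]-\expec[D_n])/\expec[D_n]\to\nu$, so $\nu_n$ stays bounded. Consequently the prefactor $\nu_n^2/(\nu_n/2\vee 1)$ in \eqref{thm-1} is $O(1)$ and the self-loop error is $O(1/\ell_n)=O(1/n)$, which yields the first claim in \eqref{Sn-Mn-Pois-a2}. The only quantity appearing in \eqref{thm-2}--\eqref{thm-3} that is not manifestly bounded is $\mu_n^{\sss(3)}$, which may diverge once the third moment is infinite; this is the heart of the matter.

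The key estimate is the crude but effective bound
\[
\mu_n^{\sss(3)}=\frac{1}{\ell_n}\sum_{i\in[n]} d_i(d_i-1)(d_i-2)\leq \dmax\,\frac{1}{\ell_n}\sum_{i\in[n]} d_i(d_i-1)=\dmax\,\nu_n,
\]
obtained by replacing the factor $(d_i-2)$ by $\dmax$. Squaring and dividing by $\ell_n$ gives $(\mu_n^{\sss(3)})^2/\ell_n\leq \dmax^2\nu_n^2/\ell_n=O(\dmax^2/n)$, while $\nu_n^4/\ell_n=O(1/n)$ since $\nu_n$ is bounded; absorbing the latter into the former (as $\dmax\geq 1$) shows that the right-hand sides of \eqref{thm-2} and \eqref{thm-3} are $O(\dmax^2/n)$, the prefactors $1/(\lambdanM\vee1)$ and $1/((\lambdanS+\lambdanM)\vee1)$ being at most $1$. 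Combining these estimates with the event-wise comparison of the first paragraph establishes \eqref{Sn-Mn-Pois-a2}--\eqref{Sn-Mn-Pois-c2}.

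The only genuine obstacle is the possible divergence of the third factorial moment $\mu_n^{\sss(3)}$ in this regime: unlike in Theorem~\ref{thm-PA-fin-var-b}, finiteness of $\expec[D_n^3]$ is not assumed, so one cannot simply read off $\mu_n^{\sss(3)}=O(1)$. The bound $\mu_n^{\sss(3)}\leq\dmax\nu_n$ trades this uncontrolled moment for the single parameter $\dmax$, and it is precisely the square in $(\mu_n^{\sss(3)})^2$ that forces the slower rate $O(\dmax^2/n)$ for the multiple-edge and simplicity statements. By contrast, the self-loop error in \eqref{thm-1} involves only $\nu_n$, which remains bounded under the finite-variance hypothesis, explaining why $S_n$ retains the sharp rate $O(1/n)$.
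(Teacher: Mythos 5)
Your proof is correct and takes essentially the same approach as the paper: both arguments apply the total variation bounds of Theorem~\ref{thm-PA-general}, observe that $\nu_n\to\nu$ stays bounded under the finite second moment hypothesis (so $\ell_n=\Theta(n)$ and the self-loop error is $O(1/n)$), and use the key estimate $\mu_n^{\sss(3)}\leq \dmax\,\nu_n$ to control the third factorial moment, yielding the rate $O(\dmax^2/n)$ for the multiple-edge and simplicity statements. The paper's proof is merely a terse statement of these two facts; your write-up supplies the same steps in full detail.
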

\medskip

Let us relate the above result to the scale-free behavior as observed in
many random graphs. See \cite[Chapter 1]{Hofs17} for an extensive
introduction to real-world networks and their power-law degree
sequences. In a power-law setting, we have that $[1-F_n](x)\approx c
x^{-(\tau-1)}$ (unless $x$ is too large).  Thus, the number of vertices of
degree at least $x$ equals
	\eqn{
	\label{power-law-max-degree}
	n[1-F_n](x)\approx c nx^{-(\tau-1)}.
	}
This is $\Theta(1)$ precisely when $x=\Theta(n^{1/(\tau-1)})$. Thus, one
can expect that $\dmax=\Theta(n^{1/(\tau-1)})$, so that the error terms in
\eqref{Sn-Mn-Pois-a2}--\eqref{Sn-Mn-Pois-c2} are of order
$n^{(3-\tau)/(\tau-1)}$, which is $o(1)$ when $\tau>3$. In turn, $\tau>3$
corresponds to $\expec[D^2]<\infty$, so that we are in the finite-variance
degree setting.
\medskip

In our proof, a concrete bound is given of the error term in the Poisson approximation in Theorem \ref{thm-PA-fin-var} in terms of the moments of $D_n$ and $n$, of the form as in \eqref{thm-1}--\eqref{thm-4}.

\subsection{Infinite-variance degrees}
\label{sec-inf-var}
In this section, we study the configuration model with infinite-variance degrees. We assume that $\nu_n\rightarrow \infty$. When the degrees obey a power-law with exponent $\tau\in(2,3)$, we will assume that
	\eqn{
	\label{nun-asymptotic}
	\nu_n=\Theta(n^{2/(\tau-1)-1}).
	}
This corresponds to a power-law degree distribution with exponent $\tau$ (recall \eqref{power-law-max-degree}), for which $d_{\rm max}=\max_{i\in[n]} d_i=\Theta(n^{1/(\tau-1)})$ with $\tau\in(2,3)$. In this case, our main result is the following central limit theorem for the number of self-loops:

\begin{theorem}[CLT for self-loops in $\CMnd$ with infinite-variance degrees]
\label{thm-PA-inf-var}
For $\CMnd$ where $\bfdit$ satisfies the Degree Regularity Condition \ref{cond-degrees-regcond}, while $\nu_n\rightarrow \infty$ as in \eqref{nun-asymptotic}. Then, for $\tau>2$,
	\eqn{
	\label{self-loops-nu-infty}
	\frac{S_n-\nu_n/2}{\sqrt{\nu_n/2}}\convd Z.
	}
\end{theorem}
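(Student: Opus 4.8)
The plan is to derive the central limit theorem from the quantitative Poisson approximation of Theorem~\ref{thm-PA-general}, combined with the elementary central limit theorem for Poisson variables with diverging mean. Indeed, a Poisson variable $P_\lambda\sim\Po(\lambda)$ satisfies $(P_\lambda-\lambda)/\sqrt{\lambda}\convd Z$ as $\lambda\to\infty$, where $Z$ is standard normal, as one reads off immediately from its characteristic function. Since $\lambdanS=(\nu_n/2)(1+O(1/n))$ and $\nu_n\to\infty$, the mean $\lambdanS$ diverges, so the recentered and rescaled Poisson law converges to $Z$. It therefore suffices to show that $S_n$ is close enough to $\Po(\lambdanS)$ in total variation that this convergence transfers.

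To this end I would invoke the bound \eqref{thm-1}. Because $\nu_n\to\infty$, eventually $(\nu_n/2\vee 1)=\nu_n/2$, so
\eqn{
\dtv{\Law(S_n)-\Po(\lambdanS)}\leq \frac{C}{\nu_n/2}\,\frac{\nu_n^2}{\ell_n}=\frac{2C\nu_n}{\ell_n}.
}
Under Condition~\ref{cond-degrees-regcond} one has $\ell_n=n\expec[D_n]=\Theta(n)$, while \eqref{nun-asymptotic} gives $\nu_n=\Theta(n^{2/(\tau-1)-1})$; hence the right-hand side is of order $n^{2/(\tau-1)-2}$, which tends to $0$ exactly when $2/(\tau-1)<2$, i.e.\ when $\tau>2$. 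Thus $\dtv{\Law(S_n)-\Po(\lambdanS)}\to 0$.

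Total variation distance is invariant under the bijective affine map $x\mapsto(x-\lambdanS)/\sqrt{\lambdanS}$, so, writing $P_n\sim\Po(\lambdanS)$,
\eqn{
\dtv{\Law\big((S_n-\lambdanS)/\sqrt{\lambdanS}\big)-\Law\big((P_n-\lambdanS)/\sqrt{\lambdanS}\big)}=\dtv{\Law(S_n)-\Po(\lambdanS)}\to 0.
}
Since total variation convergence implies convergence in distribution and $(P_n-\lambdanS)/\sqrt{\lambdanS}\convd Z$ by the Poisson central limit theorem, I conclude $(S_n-\lambdanS)/\sqrt{\lambdanS}\convd Z$. It remains to replace the centering and scaling by $\nu_n/2$. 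From \eqref{lambda-defs} a direct computation gives $\lambdanS-\nu_n/2=\nu_n/[2(\ell_n-1)]=\Theta(\nu_n/n)$, whence in the decomposition
\eqn{
\frac{S_n-\nu_n/2}{\sqrt{\nu_n/2}}=\frac{S_n-\lambdanS}{\sqrt{\lambdanS}}\sqrt{\frac{\lambdanS}{\nu_n/2}}+\frac{\lambdanS-\nu_n/2}{\sqrt{\nu_n/2}}
}
the ratio $\lambdanS/(\nu_n/2)=1+O(1/n)\to 1$, and the additive term is $\Theta(\sqrt{\nu_n}/n)=\Theta(n^{1/(\tau-1)-3/2})=o(1)$ for $\tau>2$. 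Slutsky's theorem then yields \eqref{self-loops-nu-infty}.

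The only delicate point is the matching of rates: the Poisson approximation in \eqref{thm-1} deteriorates as $\nu_n$ grows, yet the CLT can still be read off from it because the error rate $\nu_n/\ell_n$ remains $o(1)$. This is precisely the regime $\nu_n=o(n)$, equivalently $\tau>2$, and both the vanishing of the total variation bound and the negligibility of the Slutsky correction hinge on tracking this single threshold; no moment computation beyond \eqref{lambda-defs} is required.
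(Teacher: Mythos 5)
Your proposal is correct and follows essentially the same route as the paper's proof: bound $\dtv{\Law(S_n)-\Po(\lambdanS)}$ via \eqref{thm-1}, note it is $O(\nu_n/\ell_n)=o(1)$ in this regime, invoke the Poisson CLT for $\Po(\lambdanS)$ with $\lambdanS\to\infty$, and transfer the limit using $\lambdanS=(\nu_n/2)(1+O(1/n))$. The only (immaterial) difference is that you verify $\nu_n/\ell_n\to 0$ from \eqref{nun-asymptotic} and $\ell_n=\Theta(n)$, while the paper uses $\nu_n\leq \dmax=o(n)$; your write-up also makes explicit the Slutsky step and the affine invariance of total variation that the paper leaves implicit.
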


Unfortunately, our proof does not apply to the multiple edges, since the number of multiple edges between vertices of degree $d_i\gg \sqrt{n}$ grows too rapidly. In this case, we need to truncate the degrees such that $\dmax=o(\sqrt{n})$:

\begin{theorem}[CLT for multiple edges in $\CMnd$ with infinite-variance degrees]
\label{thm-PA-inf-var-b}
Let $\dmax=o(\sqrt{n})$ and $\nu_n\rightarrow \infty$. Then,
	\eqn{
	\label{multiple-edges-nu-infty}
	\frac{M_n-\lambdanM}{\sqrt{\lambdanM}}\convd Z,
	}
where $Z$ is a standard normal random variable.
\end{theorem}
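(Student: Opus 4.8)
The plan is to deduce the central limit theorem directly from the quantitative Poisson approximation of Theorem~\ref{thm-PA-general}, exploiting the classical fact that a Poisson law with large mean is itself asymptotically Gaussian. The only real work is to check that the Stein bound \eqref{thm-2} tends to zero under the hypotheses $\dmax = o(\sqrt{n})$ and $\nu_n \to \infty$; once that is done, $M_n$ and $\Po(\lambdanM)$ become indistinguishable in the limit, and the result follows from the Poisson CLT.

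First I would record some elementary consequences of the truncation $\dmax = o(\sqrt{n})$. Since the degrees are positive integers, $\ell_n = \sum_i d_i \geq n$. From $d_i(d_i-1) \le (\dmax-1)\, d_i$ we get $\nu_n < \dmax$, and from $(d_i)_3 \le \dmax\, d_i(d_i-1)$ we get $\mu_n^{\sss(3)} \le \dmax\, \nu_n$. Next I would verify that $\lambdanM \to \infty$ with $\lambdanM \sim \nu_n^2/4$: writing $\sum_{i<j} d_i(d_i-1)d_j(d_j-1) = \tfrac12\big[(\nu_n \ell_n)^2 - \sum_i (d_i(d_i-1))^2\big]$ and bounding $\sum_i (d_i(d_i-1))^2 \le \dmax^2 \nu_n \ell_n$, the subtracted term is negligible because $\dmax^2/(\nu_n \ell_n) = o(1)$, so $\lambdanM = (\nu_n^2/4)(1+o(1))$, which is where the hypothesis $\nu_n\to\infty$ enters.

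With these estimates in hand the bound \eqref{thm-2} reads, up to the universal constant,
\[
\dtv{\Law(M_n)-\Po(\lambdanM)} \lesssim \frac{1}{\nu_n^2}\,\frac{(\mu_n^{\sss(3)})^2 + \nu_n^4}{\ell_n} \le \frac{\dmax^2 \nu_n^2 + \nu_n^4}{\nu_n^2\, \ell_n} = \frac{\dmax^2 + \nu_n^2}{\ell_n} \le \frac{2\dmax^2}{n} = o(1),
\]
using $(\mu_n^{\sss(3)})^2 \le \dmax^2 \nu_n^2$, then $\nu_n < \dmax$ together with $\ell_n \ge n$. Hence $\dtv{\Law(M_n) - \Po(\lambdanM)} \to 0$.

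Finally, since total variation distance is invariant under the deterministic affine map $x \mapsto (x-\lambdanM)/\sqrt{\lambdanM}$, the standardized variable $(M_n - \lambdanM)/\sqrt{\lambdanM}$ has the same limiting law as $(P_n - \lambdanM)/\sqrt{\lambdanM}$, where $P_n \sim \Po(\lambdanM)$. Because $\lambdanM \to \infty$, the classical central limit theorem for the Poisson distribution gives $(P_n - \lambdanM)/\sqrt{\lambdanM} \convd Z$, and transferring this through the vanishing total variation distance yields \eqref{multiple-edges-nu-infty}. The main (and essentially only) obstacle is the bookkeeping of the degree sums in the displayed estimate: one must confirm that the truncation $\dmax = o(\sqrt n)$ is exactly strong enough to annihilate both the $(\mu_n^{\sss(3)})^2$ and the $\nu_n^4$ contributions in \eqref{thm-2} simultaneously, which is precisely why the hypothesis cannot be weakened to $\dmax = O(\sqrt n)$.
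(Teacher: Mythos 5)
Your proposal is correct and follows essentially the same route as the paper: both plug the hypotheses into the quantitative Poisson approximation of Theorem \ref{thm-PA-general}, reduce the error bound to $O(\dmax^2/\ell_n)=o(1)$ via $\mu_n^{\sss(3)}\leq \dmax\nu_n$ and $\nu_n\leq \dmax$, and then conclude by transferring the classical CLT for $\Po(\lambdanM)$ through the vanishing total variation distance, using $\lambdanM=\Theta(\nu_n^2)\to\infty$. The only difference is cosmetic: you verify $\lambdanM=(\nu_n^2/4)(1+o(1))$ explicitly, whereas the paper simply asserts $\lambdanM=\Theta(\nu_n^2)$.
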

\medskip

Alternatively, we could also count only the multiple edges between vertices of degree $o(\sqrt{n})$. Indeed, take $m_n=o(\sqrt{n})$ and define
	\eqn{
	M_n^{\sss \rm(l)}=\sum_{1\leq i<j\leq n} \indic{d_i,d_j\leq m_n} X_{ij}(X_{ij}-1)/2 .
	}
Then, Theorem \ref{thm-PA-inf-var-b} also holds for $M_n^{\sss \rm(l)}$ with $\lambdanM$ replaced with $\expec[M_n^{\sss \rm(l)}]$ for any $m_n=o(\sqrt{n})$.

\subsection{Directed and bipartite configuration models}

In this section, we discuss the directed and bipartite configuration model. 

\paragraph{Self-loops and multiple edges in the directed configuration model.} For a general description of the directed configuration model we refer to Cooper and Frieze \cite{CooFri04} and van der Hofstad \cite[Section 7.8]{Hofs17}. Fix $\bfdit^{\sss \rm (in)}=(d_i^{\sss \rm (in)})_{i\in[n]}$ and $\bfdit^{\sss \rm (out)}=(d_i^{\sss \rm (out)})_{i\in[n]}$ to be sequences of in-degrees and out-degrees of the vertices $i\in[n]$, respectively. For a graph with in- and out-degree sequence $\bfdit=(\bfdit^{\sss \rm (in)},\bfdit^{\sss \rm (out)})$ to exist, we need to assume that
\eqn{\label{hej}
    	\hat{\ell}_n = \sum_{i\in [n]} d_i^{\sss \rm (in)}=\sum_{i\in [n]} d_i^{\sss \rm (out)}.
    	}
The directed configuration model $\DCMnd$ is obtained by pairing each in-half-edge to a uniformly chosen out-half-edge. Similarly as for the undirected case, we may investigate limit laws for the number of self-loops and multiple edges $(\hat{S}_n,\hat{M}_n)$. Self-loops occur if an in-half-edge is paired to an out-half-edge incident to the same vertex. Multiple edges occur between a pair of vertices, if two in-half-edges that are incident to one of the vertices are paired to two out-half-edges that are incident to the other vertex. Note that we are not considering two edges  with opposite directions between two vertices as a pair of multiple-edges (since this phenomenon actually often happens in real-world networks), but only  pairs of edges with the same direction. Thus, we define 
	\eqan{
	\label{Mnhat-def}
	\hat{M}_n&=\sum_{1\leq i<j\leq n} \Big[X_{ij}^{\sss \rm (in)}(X_{ij}^{\sss \rm (in)}-1)/2+X_{ij}^{\sss \rm (out)}(X_{ij}^{\sss \rm (out)}-1)/2\Big]\nn\\
	&=\sum_{i\neq j,~i,j\in[n]}X_{ij}^{\sss \rm (in)}(X_{ij}^{\sss \rm (in)}-1)/2=\sum_{i\neq j,~i,j\in[n]}X_{ij}^{\sss \rm (out)}(X_{ij}^{\sss \rm (out)}-1)/2,
	}
where $ X_{ij}^{\sss \rm (in)} $ are the number of edges between $ i $ and $ j $ that are directed from $ j $ to $ i $ and $ X_{ij}^{\sss \rm (out)} $ are the number of edges between $ i $ and $ j $ that are directed from $ i $ to $j$, and the last equality follows by the symmetry $X_{ij}^{\sss \rm (in)}=X_{ji}^{\sss \rm (out)}$.
\medskip

Let
	\eqn{
	\label{lambda-defsd}
	\hat{\lambda}_n^{\hS}=\expec[\hat{S}_n],\qquad \hat{\lambda}_n^{\hM}=\expec[\hat{M}_n].
	}
By similar calculations as in the undirected case we get 
	\eqn{
	\hat{\lambda}_n^{\hS}=\frac{\sum_{i\in[n]} d_i^{\sss \rm (in)}d_i^{\sss \rm (out)}}{\hat{\ell}_n},
	\qquad
	\hat{\lambda}_n^{\hM}=\frac{\sum_{i\neq j,~i,j\in[n]} d_i^{\sss \rm (in)}(d_i^{\sss \rm (in)}-1)d_j^{\sss \rm (out)}(d_j^{\sss \rm (out)}-1)}{2\hat{\ell}_n(\hat{\ell}_n-1)}.
	}
We also define 
	\eqn{
	\mu_n^{\sss(r,\rm{in})}=\frac{\sum_{i\in[n]} (d_i^{\sss \rm (in)})_r}{\hat{\ell}_n},
		\qquad
		\mu_n^{\sss(r,{\rm out})}=\frac{\sum_{i\in[n]} (d_i^{\sss \rm (out)})_r}{\hat{\ell}_n}.
	}
Then, our main result for the directed CM is as follows:
\begin{theorem}[Poisson approximation of self-loops and multiple edges in directed CM]
\label{thm-PA-dir}
For $\DCMnd$, there exists a universal constant $C>0$ such that
	\eqn{
	\label{thm-1d}
	\dtv{\Law(\hat{S}_n)-\Po(\hat{\lambda}_n^{\hS})}\leq \frac{C}{(\hat{\lambda}_n^{\hS}\vee 1)}\frac{( \hat{\lambda}_n^{\hS})^2}{\hat{\ell}_n},
	}
	\eqn{
	\label{thm-2d}
	\dtv{\Law(\hat{M}_n)-\Po(\hat{\lambda}_n^{\hM})}\leq \frac{C}
	{(\hat{\lambda}_n^{\hM} \vee 1)}\frac{\mu_n^{\sss(3,\rm{in})}\mu_n^{\sss(3,\rm{out})}+( \hat{\lambda}_n^{\hM})^2}{\hat{\ell}_n},
	}
and
	\eqn{
	\label{thm-3d}
	\dtv{\Law(\hat{S}_n+\hat{M}_n)-\Po(\hat{\lambda}_n^{\hS}+\hat{\lambda}_n^{\hM})}\leq \frac{C}{((\hat{\lambda}_n^{\hS}+\hat{\lambda}_n^{\hM})\vee 1)}\frac{\mu_n^{\sss(3,\rm{in})}\mu_n^{\sss(3,\rm{out})}+( \hat{\lambda}_n^{\hM})^2}{\hat{\ell}_n}.
	}
In particular,
	\begin{align}
	\label{thm-4d}
	&\prob\big(\DCMnd\text{ \rm{simple}}\big)=\prob(\hat{S}_n+\hat{M}_n=0)=\e^{-\hat{\lambda}_n^{\hS}-\hat{\lambda}_n^{\hM}}+\hat{r}_n,
	\nonumber\\&
	\text{where}
	\qquad
	|\hat{r}_n|\leq \frac{C}{((\hat{\lambda}_n^{\hS}+\hat{\lambda}_n^{\hM})\vee 1)}\frac{\mu_n^{\sss(3,\rm{in})}\mu_n^{\sss(3,\rm{out})}+( \hat{\lambda}_n^{\hM})^2}{\hat{\ell}_n}.
	\end{align}
\end{theorem}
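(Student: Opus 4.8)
The plan is to adapt the Chen--Stein coupling argument underlying the proof of Theorem~\ref{thm-PA-general} to the directed setting, replacing the uniform matching of half-edges by a uniform bijection $\pi$ from the $\hat{\ell}_n$ in-half-edges to the $\hat{\ell}_n$ out-half-edges. First I would record the indicator representations. Writing $\mathrm{In}(i)$ and $\mathrm{Out}(i)$ for the sets of in- and out-half-edges at vertex $i$, a self-loop at $i$ corresponds to a pair $(s,t)$ with $s\in\mathrm{In}(i)$, $t\in\mathrm{Out}(i)$ and $\pi(s)=t$, so that
\eqn{
\hat{S}_n=\sum_{i\in[n]}\sum_{s\in\mathrm{In}(i),\,t\in\mathrm{Out}(i)}\indic{\pi(s)=t},
}
while a directed multiple edge is indexed by a vertex pair $i\neq j$, two in-half-edges $\{s_1,s_2\}\subseteq\mathrm{In}(i)$ and two out-half-edges $\{t_1,t_2\}\subseteq\mathrm{Out}(j)$, together with one of the two matchings between them. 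Since $\prob(\pi(s)=t)=1/\hat{\ell}_n$ and $\prob(\pi(s_1)=t_1,\pi(s_2)=t_2)=1/[\hat{\ell}_n(\hat{\ell}_n-1)]$, summing the indicator probabilities reproduces exactly the means $\hat{\lambda}_n^{\hS}$ and $\hat{\lambda}_n^{\hM}$ in \eqref{lambda-defsd}.

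Next I would apply the coupling form of the Chen--Stein bound to $W=\hat{S}_n$, to $W=\hat{M}_n$, and to $W=\hat{S}_n+\hat{M}_n$ separately, with $\lambda$ the corresponding mean. For a configuration $\alpha$ (a would-be self-loop or multiple edge) I construct a coupling of $W$ with a variable distributed as $W-1$ conditioned on $\{I_\alpha=1\}$, by forcing the pairings prescribed by $\alpha$ and rematching the remaining half-edges uniformly, reusing the original bijection $\pi$ whenever $I_\alpha$ already holds. The point is that forcing $\alpha$ only disturbs the images under $\pi$ of the $O(1)$ half-edges occurring in $\alpha$, so the only indicators $I_\beta$ whose value can change are those sharing an in- or out-half-edge with $\alpha$; this set is the dependency neighbourhood $B_\alpha$. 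Consequently the total variation distance is bounded by $(1\wedge\lambda^{-1})$ times a sum of two contributions: a first term $\sum_\alpha p_\alpha\sum_{\beta\in B_\alpha}p_\beta$ coming from the expected number of neighbouring configurations, and a second term $\sum_\alpha\sum_{\beta\in B_\alpha}\expec[I_\alpha I_\beta]$ coming from neighbouring configurations that are simultaneously realized.

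The remaining work is combinatorial. For self-loops, configurations in $B_\alpha$ share a half-edge at a single vertex, and carrying out the two sums produces a bound of order $(\hat{\lambda}_n^{\hS})^2/\hat{\ell}_n$, giving \eqref{thm-1d}. For multiple edges, two overlapping configurations that share an in-half-edge at $i$ force three in-half-edges at $i$, contributing a factor $\sum_i(d_i^{\sss \rm (in)})_3$, and symmetrically an out-half-edge overlap at $j$ contributes $\sum_j(d_j^{\sss \rm (out)})_3$; dividing by the appropriate powers of $\hat{\ell}_n$ yields the cross term $\mu_n^{\sss(3,\rm{in})}\mu_n^{\sss(3,\rm{out})}/\hat{\ell}_n$, whereas configurations meeting only through a single extra half-edge reproduce $(\hat{\lambda}_n^{\hM})^2/\hat{\ell}_n$. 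This gives \eqref{thm-2d}, and running the same computation for $W=\hat{S}_n+\hat{M}_n$ with $B_\alpha$ now allowed to mix the two types of configurations gives \eqref{thm-3d}, the cross contributions between a self-loop and a multiple edge being of lower order and absorbed into the stated bound. Finally, \eqref{thm-4d} is immediate, since $\DCMnd$ is simple exactly when $\hat{S}_n+\hat{M}_n=0$, and $|\prob(W=0)-\e^{-\lambda}|\le\dtv{\Law(W)-\Po(\lambda)}$ with $\lambda=\hat{\lambda}_n^{\hS}+\hat{\lambda}_n^{\hM}$.

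I expect the main obstacle to be the bookkeeping in the second (joint-occurrence) term for the multiple edges: one must verify that the only overlaps contributing at leading order are those producing the third factorial moments $\mu_n^{\sss(3,\rm{in})}$ and $\mu_n^{\sss(3,\rm{out})}$, and that overlaps in which two in-half-edges of $\alpha$ and $\beta$ coincide either are forbidden, since a half-edge cannot map to two distinct out-half-edges, or collapse to lower-order terms. A secondary subtlety, inherited from the matching constraint, is justifying that the coupling can indeed be taken so that distant configurations $\beta\notin B_\alpha$ are left untouched; this uses the exchangeability of the uniform bijection $\pi$, exactly as in the undirected argument, and is what removes any analogue of the extra remainder term appearing in the local version of the method.
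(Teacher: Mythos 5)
Your global strategy---the rewiring coupling plugged into the Barbour--Holst--Janson bound (Theorem \ref{thm-PA-sums}), followed by case-by-case counting---is the same as the paper's, but your accounting contains a genuine error: the claim that forcing $\alpha$ only disturbs indicators $I_\beta$ sharing an in- or out-half-edge with $\alpha$, so that the double sum may be restricted to a dependency neighbourhood $B_\alpha$ and the contribution from $\beta\notin B_\alpha$ vanishes. No coupling can achieve this. For two potential self-loops $\alpha,\beta$ at distinct vertices (no shared half-edges), $\prob(I_\beta=1)=1/\hat{\ell}_n$ while $\prob(I_\beta=1\mid I_\alpha=1)=1/(\hat{\ell}_n-1)$, so \emph{any} coupling must satisfy $\prob(J_{\beta\alpha}\neq I_\beta)\geq 1/[\hat{\ell}_n(\hat{\ell}_n-1)]>0$. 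Concretely, the rewiring re-pairs the former partners of $\alpha$'s half-edges, and this creates $\beta$ exactly when those partners were $\beta$'s half-edges---an event that is possible even when $\alpha$ and $\beta$ are disjoint. Summing $p_\alpha\,\expec[|I_\beta-J_{\beta\alpha}|]$ over disjoint self-loop pairs yields the term of order $(\hat{\lambda}_n^{\hS})^2/\hat{\ell}_n$, i.e.\ precisely the dominant term in \eqref{thm-1d}; likewise the pairs of parallel edges on four distinct vertices (the directed analogue of the paper's case (d1)) produce the $(\hat{\lambda}_n^{\hM})^2/\hat{\ell}_n$ term in \eqref{thm-2d}. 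Your restricted sums would miss both of these main terms, and would instead be dominated by half-edge-sharing pairs, which give third-moment quantities such as $\sum_i (d_i^{\sss \rm (in)})^2 d_i^{\sss \rm (out)}/\hat{\ell}_n^2$; these are not bounded by $(\hat{\lambda}_n^{\hS})^2/\hat{\ell}_n$ in general (take a single vertex of in-degree $n^{0.9}$ and all other degrees bounded). Your attribution of $(\hat{\lambda}_n^{\hM})^2/\hat{\ell}_n$ to configurations ``meeting only through a single extra half-edge'' fails for the same reason: such pairs are incompatible and contribute only lower-order destruction terms, not the main term.

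The repair is what the paper's proof actually does: keep the full sum over all pairs $\alpha\neq\beta$ in Theorem \ref{thm-PA-sums} and organize it by the intersection pattern of $\alpha$ and $\beta$ (the directed analogues of cases (a)--(d)). Disjoint pairs contribute through creation events of probability $O(1/\hat{\ell}_n^2)$ each and give the $(\hat{\lambda}_n^{\hS})^2/\hat{\ell}_n$ and $(\hat{\lambda}_n^{\hM})^2/\hat{\ell}_n$ terms; compatible pairs sharing a common edge give the $\mu_n^{\sss(3,\rm{in})}\mu_n^{\sss(3,\rm{out})}/\hat{\ell}_n$ term (this part of your bookkeeping is correct); incompatible overlapping pairs contribute via destruction at lower order. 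In particular, your closing ``secondary subtlety''---that exchangeability permits a coupling leaving distant configurations untouched, removing any $b_3$-type remainder---is not a subtlety to be justified: it is impossible, by the mean-discrepancy inequality above, and what replaces it is exactly the computation of those nonzero long-range creation terms.
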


\paragraph{Multiple edges in the bipartite configuration model.}
We continue with a discussion of multiple edges in the bipartite configuration model.
For a general description of the bipartite configuration model we refer e.g.\ to Blanchet and Stauffer \cite{BlaSta13} and Janson \cite{Jans13a}. Let $n^{\sss \rm (l)}$ denote the number of vertices on the left side of the bipartite graph, and $n^{\sss \rm (r)}$ the number of vertices on the right side of the bipartite graph. Fix 
 $\bfdit^{\sss\rm (l)}=(d_i^{\sss\rm (l)})_{i\in[n^{\sss \rm(l)}]}$ and $\bfdit^{\sss\rm (r)}=(d_j^{\sss\rm (r)})_{j\in[n^{\sss \rm(r)}]}$ degrees sequences for the two left and right parts, with  
	\eqn{
	\label{hej2}
    	\bar{\ell}_n = \sum_{i\in [n^{\sss \rm(l)}]} d_i^{\sss\rm (l)}=\sum_{j\in [n^{\sss \rm(r)}]} d_j^{\sss\rm (r)}.
    	}
The bipartite configuration model $\BCMnd$ is obtained by pairing each half-edge incident  to one of the vertices in $n^{\sss \rm(l)}$ to a uniformly chosen half-edge of those incident to the vertices in $n^{\sss \rm(r)}$. Thus, in this model there are obviously no self-loops.  However, there could exist multiple edges $\bar{M}_n$.  Multiple edges occur between a pair of vertices $(i,j)$ when two half-edges incident to a vertex $i\in[n^{\sss \rm(l)}]$ are paired to two half-edges  that are incident to a vertex $j\in[n^{\sss \rm(r)}]$.

To study the number of multiple edges, we define 
	\eqn{
	\label{Mnbar-def}
	\bar{M}_n=\sum_{i\in[n^{\sss \rm(l)}], j\in[n^{\sss \rm(r)}]}\bar{X}_{ij}(\bar{X}_{ij}-1)/2,
	}
where $ \bar{X}_{ij} $ denotes the number of edges between $ i $ and $ j $.
Let $\bar{\lambda}_n^{\hM}=\expec[\bar{M}_n]$. By similar calculations as for the standard configuration model we get
	\eqn{
	\bar{\lambda}_n^{\bM}=\frac{\sum_{i\in[n^{\sss \rm(l)}], j\in[n^{\sss \rm(r)}]} d_i^{\sss\rm (l)}(d_i^{\sss\rm (l)}-1)d_i^{\sss\rm (r)}(d_i^{\sss\rm (r)}-1)}{2\bar{\ell}_n(\bar{\ell}_n-1)}.
	}
We also define 
	\eqn{
	\mu_n^{\sss(k,\rm{l})}=\frac{\sum_{i\in[n^{\sss \rm(l)}]} (d_i^{\sss\rm (l)})_k}{\bar{\ell}_n},
		\qquad
		\mu_n^{\sss(k,\rm{r})}=\frac{\sum_{j\in[n^{\sss \rm(r)}]} (d_j^{\sss\rm (r)})_k}{\bar{\ell}_n}.
	}
Then, our main result for the bipartite CM is as follows:

\begin{theorem}[Poisson approximation of multiple edges in bipartite CM]
\label{thm-PA-bip}
For $\BCMnd$, there exists a universal constant $C>0$ such that
	\eqn{
	\label{thm-2b}
	\dtv{\Law(\bar{M}_n)-\Po(\bar{\lambda}_n^{\bM})}\leq \frac{C}{(\bar{\lambda}_n^{\bM} \vee 1)}\frac{\mu_n^{\sss(3,\rm{l})}\mu_n^{\sss(3,\rm{r})}+(\bar{\lambda}_n^{\bM})^2}{\bar{\ell}_n}.
	}
In particular,
	\eqn{
	\label{thm-4b}
	\prob(\BCMnd\text{ \rm{simple}})=\prob(\bar{M}_n=0)=\e^{-\bar{\lambda}_n^{\bM}}+\bar{r}_n,
	\qquad
	\text{where}
	\qquad
	|\bar{r}_n|\leq \frac{C}{(\bar{\lambda}_n^{\bM} \vee 1)}\frac{\mu_n^{\sss(3,\rm{l})}\mu_n^{\sss(3,\rm{r})}+(\bar{\lambda}_n^{\bM})^2}{\bar{\ell}_n}.
	}
\end{theorem}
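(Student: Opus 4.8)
The plan is to follow the route used for the multiple-edge estimate \eqref{thm-2} in Theorem~\ref{thm-PA-general}, specialised to the bipartite matching. Since $\BCMnd$ has no self-loops, only the multiple-edge contribution survives, so the argument is a streamlined version of the undirected one. First I would represent $\bar{M}_n$ as a sum of indicators. Let $\Ical$ be the set of \emph{potential double edges}: an element $\alpha\in\Ical$ is an unordered pair of prospective edges $\{(s,t),(s',t')\}$, where $s,s'$ are two distinct half-edges at a common left vertex $i$ and $t,t'$ are two distinct half-edges at a common right vertex $j$. Writing $I_\alpha$ for the indicator that the uniform bipartite matching realises both pairings $s\leftrightarrow t$ and $s'\leftrightarrow t'$, we have $\bar{M}_n=\sum_{\alpha\in\Ical}I_\alpha$. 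Since any two prescribed left half-edges are matched to any two prescribed right half-edges (in a prescribed order) with probability $1/[\bar{\ell}_n(\bar{\ell}_n-1)]$, summing $p_\alpha:=\prob(I_\alpha=1)$ over $\Ical$ recovers $\expec[\bar{M}_n]=\bar{\lambda}_n^{\bM}$, which validates the representation.

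Next I would apply the coupling form of the Chen--Stein Poisson approximation. For each $\alpha$ I construct, on a common probability space, a random variable $\bar{M}_n^{(\alpha)}$ with $\Law(\bar{M}_n^{(\alpha)})=\Law(\bar{M}_n\mid I_\alpha=1)$. The standard bound then reads
\[
\dtv{\Law(\bar{M}_n)-\Po(\bar{\lambda}_n^{\bM})}\le \frac{1}{\bar{\lambda}_n^{\bM}\vee 1}\sum_{\alpha\in\Ical} p_\alpha\,\expec\big|\bar{M}_n+1-\bar{M}_n^{(\alpha)}\big|,
\]
where the Stein factor $\tfrac{1-\e^{-\lambda}}{\lambda}\le 1/(\bar{\lambda}_n^{\bM}\vee1)$ produces the prefactor in \eqref{thm-2b}. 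The natural coupling forces the two pairings of $\alpha$ and re-matches the $O(1)$ displaced half-edges: starting from the unconditioned matching one ``switches'' so that $s\leftrightarrow t$ and $s'\leftrightarrow t'$, re-pairing the former partners of $s,s',t,t'$. Then $\bar{M}_n+1-\bar{M}_n^{(\alpha)}$ records only the double edges destroyed or created by this local switch.

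It remains to estimate $\sum_\alpha p_\alpha\expec|\bar{M}_n+1-\bar{M}_n^{(\alpha)}|$ by decomposing the switch-induced change into two types. The double edges that share a half-edge (equivalently, a vertex) with $\alpha$ involve a third half-edge at the common left vertex $i$ and at the common right vertex $j$; summing the associated matching probabilities of order $\bar{\ell}_n^{-3}$ over all $i,j$ gives $\sum_{i,j}(d_i^{\sss \rm (l)})_3(d_j^{\sss \rm (r)})_3/\bar{\ell}_n^{3}=\mu_n^{\sss(3,\rm{l})}\mu_n^{\sss(3,\rm{r})}/\bar{\ell}_n$. The double edges created among the reshuffled former partners are generic double edges appearing at a cost of order $1/\bar{\ell}_n$, and summing reproduces the $(\bar{\lambda}_n^{\bM})^2/\bar{\ell}_n$ term; elementary moment inequalities absorb the mixed contributions into these two. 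Controlling this second, long-range effect of the re-matching---the fact that conditioning on $I_\alpha=1$ perturbs the global bijection---is the main obstacle, whereas the first, purely local count is routine combinatorics.

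Finally, the ``in particular'' claim is immediate. Simplicity of $\BCMnd$ is exactly the event $\{\bar{M}_n=0\}$, because the bipartite model has no self-loops, and total-variation distance dominates the difference of the probabilities of any single event, so
\[
\big|\prob(\bar{M}_n=0)-\e^{-\bar{\lambda}_n^{\bM}}\big|=\big|\prob(\bar{M}_n=0)-\prob(\Po(\bar{\lambda}_n^{\bM})=0)\big|\le \dtv{\Law(\bar{M}_n)-\Po(\bar{\lambda}_n^{\bM})}.
\]
This yields \eqref{thm-4b} with $\bar{r}_n$ bounded by the right-hand side of \eqref{thm-2b}, completing the plan.
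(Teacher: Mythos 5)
Your proposal is correct and follows essentially the same route as the paper: the paper proves Theorem \ref{thm-PA-bip} by running its case (d) rewiring coupling inside the Chen--Stein bound of Theorem \ref{thm-PA-sums}, with exactly your two dominant contributions --- pairs $\alpha,\beta$ sharing a common edge (three half-edges at each endpoint), giving $\mu_n^{\sss(3,\rm{l})}\mu_n^{\sss(3,\rm{r})}/\bar{\ell}_n$, and disjoint pairs created by the switch, giving $(\bar{\lambda}_n^{\bM})^2/\bar{\ell}_n$ --- and deduces \eqref{thm-4b} by the same observation that total variation dominates the difference of point probabilities at $0$. The only cosmetic difference is that you quote the coupling bound in the form $\sum_\alpha p_\alpha\,\expec|\bar{M}_n+1-\bar{M}_n^{(\alpha)}|$, which reduces to the paper's per-indicator form $\sum_\alpha p_\alpha^2+\sum_\alpha\sum_{\beta\neq\alpha}p_\alpha\expec|I_\beta-J_{\beta\alpha}|$ by the triangle inequality.
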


\begin{rem}
For the directed configuration model $\DCMnd$ we can also prove results that correspond to Theorems \ref{thm-PA-fin-var}--\ref{thm-PA-inf-var-b} for the undirected case. 
Similarly, for the bipartite configuration model $\BCMnd$, we can prove results that correspond to Theorems \ref{thm-PA-fin-var-b}, \ref{thm-PA-fin-var-c} and \ref{thm-PA-inf-var-b} (recalling that there are no self-loops in this model). We leave these statements of the other results to the reader.
\end{rem}

\subsection{Discussion and open problems}
\label{sec-disc}
In this section, we discuss our results and provide open problems. 

Instead of investigating $S_n$ and $M_n$ as in \eqref{Sn-Mn-def}, one could also investigate other random variables that imply simplicity when the variable equals zero. An example would be to study $\widetilde M_n$ in \eqref{number-multiple-edges}. Another natural example would be
	\eqn{
	\label{SMgeq12}
	S_n^{\sss \rm(l)}=\sum_{i\in[n]} \indic{X_{ii}\geq 1},
	\qquad
	M_n^{\sss \rm(l)}=\sum_{1\leq i<j\leq n}\indic{X_{ij}\geq 2},
	}
as Janson does in \cite{Jans06b}. Both alternatives are of interest, since they all quantify different aspects of how many self-loops and multiple edges there are, and might satisfy central limits theorems for infinite-variance degrees for {\it different} values of $\tau$. However, application of Stein's method to these random variables is more difficult. For $M_n^{\sss \rm(l)}$, this is primarily due to the fact that the conditional distribution of $\indic{X_{kl}\geq 2}$ conditionally on $\indic{X_{ij}\geq 2}=1$ is quite involved.
\medskip

We next discuss configuration models in the power-law setting where $\tau\in(2,3)$ in some more detail. As we see in Theorems \ref{thm-PA-inf-var}--\ref{thm-PA-inf-var-b}, the number of self-loops and multiple edges in this case tend to infinity in probability, so that it is highly unlikely that there are none. This makes that the approach to obtain simple graphs by conditioning the configuration model to be simple is no viable option. However, real-world networks with power-law degrees with $\tau\in(2,3)$ are often observed, see e.g.\ the surveys in \cite{AlbBar02, Newm03a}. For example, Newman \cite{Newm03b}  proposes, amongst others, the configuration model with power-law degrees as a model for real-world networks, while Newman, Strogatz and Watts \cite{NewStrWat00} investigate the graph distances of such models. There is ample evidence that practitioners do wish to obtain simple graphs as a null-model for many real-world networks. There are many papers using the configuration model as null-models for real-world networks. In the case of infinite-variance degrees, this gives rise to an enormous problem. One possible solution to resolve this issue is to consider, instead, the {\em erased configuration model}, as suggested by Britton, Deijfen and Martin-L\"of \cite{BriDeiMar-Lof05}. In this erased model, self-loops are simply erased and multiple-edges merged to make the graph simple. While this does not produce a graph that has a uniform distribution over the space of all simple graphs, this model is highly practical, and we see that we only remove a small proportion of the edges so that the degree distribution is virtually unaltered (see e.g.\ \cite[Chapter 7]{Hofs17} for more details). This explains our interest in configuration models with power-law degrees. Theorem \ref{thm-PA-inf-var}--\ref{thm-PA-inf-var-b} can thus be seen as quantifications of the statement that we `do not remove many edges'. For example, Van der Hoorn and Litvak \cite{HooLit15} investigate the number of removed edges in the erased configuration model in the setting where the degrees are i.i.d.\ with distribution function $F$ satisfying $[1-F](x)=c x^{-(\tau-1)}$ for some $\tau\in (2,3)$  (in fact, they even allow for extra slowly-varying functions, but we refrain from discussing this generalization). The number of removed edges corresponds to 
	\eqn{
	R_n=S_n+\sum_{1\leq i<j\leq n} \big(X_{ij}-\indic{X_{ij}\geq 1}\big),
	}
which, for $\tau\in(2,3)$, is significantly smaller than $S_n+M_n$. 
\medskip

Gao and Wormald \cite{GaoWor14} take a different approach. Indeed, they investigate the number of simple graphs in the power-law case with $\tau\in(2,3)$. Under assumptions on $\mu_n^{\sss(r)}$, they investigate sharp asymptotics for $\prob(\CMnd\text{ simple})$. Let $M_r=\ell_n \mu_n^{\sss(r)}$, then \cite[Theorem 1]{GaoWor14} assumes that $M_2=o(M_1^{9/8})$. \cite[Theorem 1]{GaoWor14} assumes that the number of vertices of degree $k$ can be uniformly bounded by a constant times $nk^{-1/\tau}$ for $\tau>5/2$, which, in particular, implies that $\dmax=O(n^{1/\tau})$. These results are highly interesting, and show in particular that $\prob(S_n+M_n=0)=\e^{-(\nu_n/2+\nu_n^2/4)(1+o(1))}$ while at the same time giving an asymptotic expression of $o(1)$ in the exponent in terms of  $\mu_n^{\sss(r)}$ with $r=1,2$ and $3$. Such results cannot be obtained from ours, since the event of simplicity is an extreme value event when $\tau\in(2,3)$ with vanishing probability, whereas we study weak limits. On the other hand, the assumption that $\dmax=O(n^{1/\tau})=o(\sqrt{n})$ with $\tau\in(2,3)$ implies that we obtain a CLT for both the number of self-loops as well as the number of multiple edges by Theorem \ref{thm-PA-inf-var-b}.
Instead of assuming that $\dmax=O(n^{1/\tau})$, we prefer to work with cases where $\sum_{i\geq k} n_i=O(nk^{-1/(\tau-1)})$. This preference is inspired by the fact that the maximum of $n$ i.i.d.\ random variables with tail distribution function $1-F(x)=cx^{-(\tau-1)}$ is of order $n^{1/(\tau-1)}$ rather than $n^{1/\tau}$. In turn, a natural choice of deterministic power-law degrees arises when we take the number $n_k$ of vertices of degree $k$ to be equal to $n_k=\lceil nF(k)\rceil -\lceil nF(k-1)\rceil$, where again $1-F(x)=cx^{-(\tau-1)}$ is a power-law distribution and also $\dmax=O(n^{1/(\tau-1)})$. See also McKay and Wolmald \cite{MckWor91} for related work where sharp asymptotics were proved for the number of graphs with degrees that are potentially quite large. It is unclear to us whether the approach of Gao and Wormald can be extended to {\em simulate} random graphs with prescribed degree sequences in the regime where $\tau\in(2,3)$, as many practitioners would need.
\medskip

Another interesting problem is to investigate whether the CLT for the number of multiple edges $M_n$ can be extended to the full range $\tau\in(2,3)$ without the restriction that $\dmax=o(\sqrt{n})$. Our current proof relies on a Poisson approximation, which in particular can only be used when the mean and the variance of the asymptotic normal distribution are comparable. We believe that this is false for some $\tau\in(2,3)$. Instead, it would be interesting to investigate whether instead Stein's method for normal asymptotic distributions can be applied. This open problem is also interesting for other sums of indicators, for example for $\widetilde M_n$ in \eqref{number-multiple-edges} and for $M_n^{\sss \rm(l)}$ in \eqref{SMgeq12}.

\paragraph{Organization.} The remainder of this paper is organised as follows. In Section \ref{sec-prelim}, we present the preliminaries used in this paper, which include a novel Poisson Cram\'er-Wold device as well as bounds on Poisson approximations. In Section \ref{sec-couplings}, we present couplings of dependent indicators that will be crucial in applying Stein's method. In Section \ref{sec-proofs}, we present the proofs of our main results.

%
%
%
%
%

\section{Preliminaries}
\label{sec-prelim}

\subsection{Poisson Cram\'er-Wold device}
\label{sec-CWD}

In this section, we show that convergence of two random variables to two
\emph{independent} Poisson variables follows when we can prove convergence of
sums of their thinned versions.  This is to Poisson random variables, as
the Cram\'er-Wold device is for Normal variables.  We start by explaining
this method, which is of independent interest.  \medskip

Let $(X,Y)$ be two integer-valued random variables. Fix $p,q\in[0,1]$ and
define
	\eqn{
  	X_p = \Bin(X,p), \qquad\qquad Y_q = \Bin(Y,q)
	}
to be two binomial random variables, independent conditioned on $X,Y$. 
Then, the Poisson Cram\'er-Wold device is the following theorem:

\begin{theorem}[Poisson Cram\'er-Wold device]
\label{thm-PCW}
Suppose that, for every $p,q\in [0,1]$, $X_p+Y_q$ has a Poisson
distribution with mean $p\muX+q\muY$. 
Then $(X,Y)$ are two independent Poisson random variables with means $\muX$
and $\muY$, respectively.
\end{theorem}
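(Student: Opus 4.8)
The plan is to work with probability generating functions (PGFs), exploiting the fact that the joint law of a pair of non-negative integer-valued random variables is uniquely determined by its bivariate PGF on $[0,1]^2$. Write $G(s,t)=\expec[s^X t^Y]$ for $s,t\in[0,1]$; this is well defined and finite since the integrand is bounded by $1$. The goal is to show that the hypothesis forces $G(s,t)=\e^{-\muX(1-s)}\e^{-\muY(1-t)}$, which is precisely the joint PGF of two independent Poisson variables with means $\muX$ and $\muY$, whence the conclusion follows.

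First I would compute the PGF of the thinned sum $X_p+Y_q$. Conditionally on $(X,Y)$, the variable $X_p=\Bin(X,p)$ has PGF $(1-p+pz)^X$, and by the conditional independence of $X_p$ and $Y_q$ one gets $\expec[z^{X_p+Y_q}\mid X,Y]=(1-p+pz)^X(1-q+qz)^Y$. Taking expectations yields the identity $\expec[z^{X_p+Y_q}]=G(1-p+pz,\,1-q+qz)$ for $z\in[0,1]$. On the other hand, the assumption $X_p+Y_q\sim\Po(p\muX+q\muY)$ gives $\expec[z^{X_p+Y_q}]=\e^{(p\muX+q\muY)(z-1)}$, so the two expressions must agree for all $p,q,z$ in range.

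The crucial step is to invert this relation to recover $G$ on all of $[0,1]^2$. The cleanest route is to evaluate at $z=0$, i.e. to look only at the void probabilities: this gives $G(1-p,1-q)=\prob(X_p+Y_q=0)=\e^{-(p\muX+q\muY)}$ for every $p,q\in[0,1]$. Substituting $s=1-p$ and $t=1-q$ and letting $(p,q)$ range over $[0,1]^2$, I obtain $G(s,t)=\e^{-\muX(1-s)-\muY(1-t)}$ on the whole square. Thus the two thinning parameters act as dual variables that sweep out the full bivariate PGF from the one-parameter family of Poisson laws supplied by the hypothesis.

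Finally, since $G(s,t)$ factorizes as $\e^{-\muX(1-s)}\cdot\e^{-\muY(1-t)}$, the joint PGF is the product of the marginal PGFs, which gives independence, and each factor is a Poisson PGF, which identifies the marginals; uniqueness of the PGF then closes the argument. I expect the reconstruction step, rather than any computation, to be the heart of the matter: one must verify that the void-probability values $G(1-p,1-q)$, as $(p,q)$ vary, genuinely determine $G$ — which they do precisely because $(1-p,1-q)$ exhausts $[0,1]^2$ — and that the non-negativity of $X,Y$ (implicit in the thinnings being well defined) legitimizes invoking PGF uniqueness.
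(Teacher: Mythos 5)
Your proof is correct, and it takes a route that is genuinely cleaner than the paper's, though both exploit the same underlying idea that the thinning parameters sweep out a transform of the joint law. The paper works with the joint moment generating function: it writes $M_{\sss X_p+Y_q}(t)=M_{\sss X,Y}(\log(p\e^{t}+(1-p)),\log(q\e^{t}+(1-q)))$, then (after reducing WLOG to $0\leq s\leq t$) picks $p=1$ and solves $q=(\e^{s}-1)/(\e^{t}-1)\in[0,1]$ so that the two arguments become exactly $(t,s)$, recovering $M_{\sss X,Y}(t,s)=\e^{\muX(\e^{t}-1)+\muY(\e^{s}-1)}$. Your PGF version replaces this inversion step by the trivial substitution $(s,t)=(1-p,1-q)$, which is a bijection of $[0,1]^2$, so no case analysis and no solving for the thinning parameters is needed; moreover, evaluating the PGF identity at $z=0$ shows that you only use the void probabilities $\prob(X_p+Y_q=0)=\e^{-(p\muX+q\muY)}$, so your argument actually proves a formally stronger statement: the conclusion follows from the hypothesis on $\prob(X_p+Y_q=0)$ alone, without needing the full Poisson law of each thinned sum. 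A further small advantage is that the PGF $G(s,t)=\expec[s^Xt^Y]$ is automatically bounded on $[0,1]^2$, so no finiteness or integrability issues arise, whereas the MGF route implicitly relies on the marginals being Poisson (via $p=1,q=0$) to guarantee finiteness of $M_{\sss X,Y}$. Both arguments close with a standard uniqueness theorem (PGF uniqueness for non-negative integer-valued vectors in your case, MGF uniqueness in the paper's), and your remark that non-negativity of $X,Y$ is forced by the thinnings being well defined is exactly the right justification for invoking it.
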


\proof Let $M_{\sss X,Y}(s,t)=\expec[\e^{sX+tY}]$ denote the joint moment
generating function of a random vector $(X,Y)$ and
$M_{\sss X}(t)=\expec[\e^{tX}]$ the moment generating function of the
random variable $X$. Recall that the moment generating function of a
binomial random variable $X$ with parameters $n$ and $p$ equals
$M_{\sss X}(t)=(p\e^t+(1-p))^n$ and that of a Poisson random variable $Y$
with parameter $\lambda$ equals $M_{\sss Y}(t)=\e^{\lambda(\e^t-1)}$.
\medskip

We know that $M_{\sss X_p+Y_q}(t) =
\expec[\e^{t(X_p+Y_q)}]=\e^{(p\muX+q\muY)(\e^{t}-1)}$. We wish to show that
$M_{\sss X,Y}(t, s)=\e^{\muX(\e^{t}-1)+\muY(\e^{s}-1)}$, and it suffices to
prove this for $s,t\geq 0$.


We rewrite the moment generating function of $X_p+Y_q$ as
	\eqan{
	M_{\sss X_p+Y_q}(t)&=\expec[\e^{t(X_p+Y_q)}]
	=\expec[(p\e^{t}+(1-p))^{X}(q\e^{t}+(1-q))^{Y}]\\
	&=M_{\sss X,Y}(\log(p\e^{t}+(1-p)), \log(q\e^{t}+(1-q))).\nn
	}
Without loss of generality, we may assume that $t\geq s$.  We also assume
that $s\geq 0$.  We take $p=1$, so that $\log(p\e^{t}+(1-p))=t$, and $q$
such that $\log(q\e^{t}+(1-q))=s$.  Solving gives
$q=(\e^{s}-1)/(\e^{t}-1) \in [0,1]$, since $0\leq s\leq t$. Then we get that  
	\eqn{
	M_{\sss X,Y}(t, s)=\e^{\muX(\e^{t}-1)+q\muY(\e^{t}-1)}=\e^{\muX(\e^{t}-1)+\muY(\e^{s}-1)},
	}
as required. We conclude that $X$ and $Y$ are independent Poisson variables
with means $\muX$ and $\muY$, respectively.
\qed
\bigskip

\begin{corollary}[Poisson Cram\'er-Wold device for convergence]
\label{cor-PCW}
Let $X^{\sss(n)},Y^{\sss(n)}$ be sequences of random variables.  Suppose
that, for every $p,q\in [0,1]$, $X_p^{\sss(n)}+Y_q^{\sss(n)}$ converges in
distribution to a Poisson distribution with mean $p\muX+q\muY$.  Then
$(X^{\sss(n)},Y^{\sss(n)})$ converges to two independent Poisson random
variables with means $\muX$ and $\muY$, respectively.
\end{corollary}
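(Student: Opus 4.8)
The plan is to upgrade Theorem~\ref{thm-PCW} to a limit statement by working entirely with \emph{probability} generating functions, which lets me reuse the substitution identity from the proof of the theorem. Since $\Bin(\Xn,p)$ must be well defined, $\Xn$ and $\Yn$ are nonnegative and integer-valued, and it therefore suffices to show that the joint probability generating function
\[
G_n(u,v):=\expec\big[u^{\Xn}v^{\Yn}\big]
\]
converges, for every $(u,v)\in[0,1]^2$, to $\e^{\muX(u-1)+\muY(v-1)}$, the joint generating function of two independent Poisson variables with means $\muX$ and $\muY$. Indeed, pointwise convergence of generating functions on $[0,1]^2$ towards a genuine probability generating function is equivalent to joint convergence in distribution.

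First I would record the key identity, exactly as in the proof of Theorem~\ref{thm-PCW} but with $\e^t$ replaced by $z\in[0,1]$. Conditioning on $(\Xn,\Yn)$ and using that $X_p^{\sss(n)}$ and $Y_q^{\sss(n)}$ are conditionally independent binomials gives
\[
\expec\big[z^{X_p^{\sss(n)}+Y_q^{\sss(n)}}\big]
=\expec\big[(pz+1-p)^{\Xn}(qz+1-q)^{\Yn}\big]
=G_n\big(pz+1-p,\,qz+1-q\big).
\]
By hypothesis $X_p^{\sss(n)}+Y_q^{\sss(n)}\convd\Po(p\muX+q\muY)$, and for nonnegative integer-valued variables this is equivalent to convergence of the left-hand side to $\e^{(p\muX+q\muY)(z-1)}$ for each $z\in[0,1]$. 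Hence, for all $p,q\in[0,1]$ and $z\in[0,1]$,
\[
G_n\big(pz+1-p,\,qz+1-q\big)\longrightarrow \e^{(p\muX+q\muY)(z-1)}.
\]

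It then remains to invert the reparametrization $(p,q,z)\mapsto(pz+1-p,\,qz+1-q)$ so as to reach an arbitrary target $(u,v)\in[0,1]^2$. Fixing such a pair and assuming without loss of generality that $u\le v$, I would set $z=u$, $p=1$ and $q=(1-v)/(1-u)$ when $u<1$; these lie in $[0,1]$ because $0\le 1-v\le 1-u$, and one checks directly that $pz+1-p=u$ and $qz+1-q=v$, while the boundary case $u=v=1$ is immediate since $G_n(1,1)=1$. Using $p(z-1)=u-1$ and $q(z-1)=v-1$ to simplify the exponent then yields $G_n(u,v)\to\e^{\muX(u-1)+\muY(v-1)}$. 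The limit is continuous with value $1$ at $(u,v)=(1,1)$, so no mass escapes to infinity, and $(\Xn,\Yn)$ converges in distribution to the claimed independent Poisson pair.

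The step I expect to require the most care is this inversion: one must verify that every $(u,v)\in[0,1]^2$ is hit by an admissible triple and handle the boundary $u=1$ or $v=1$, where either $z\to1$ or one of $p,q$ degenerates to $0$. A slightly softer alternative avoids generating functions: taking $p=q=1$ gives $\Xn+\Yn\convd\Po(\muX+\muY)$, which forces tightness of $(\Xn,\Yn)$; along any weakly convergent subsequence with limit $(X,Y)$, continuity of thinning under weak convergence identifies the subsequential limit of $X_p^{\sss(n)}+Y_q^{\sss(n)}$ with $X_p+Y_q$, so the hypothesis gives $X_p+Y_q\sim\Po(p\muX+q\muY)$ for all $p,q$, and Theorem~\ref{thm-PCW} forces every subsequential limit to equal the same independent Poisson pair.
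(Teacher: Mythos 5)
Your proposal is correct, and your primary argument takes a genuinely different route from the paper; your closing ``softer alternative'' is essentially the paper's own proof. The paper argues by compactness: taking $p=1,q=0$ (and $p=0,q=1$) gives tightness of each marginal, one extracts a subsequence with weak limit $(X,Y)$, applies Theorem~\ref{thm-PCW} to identify $(X,Y)$ as the independent Poisson pair, and concludes because all subsequential limits agree. Note that this route silently uses that binomial thinning passes to weak limits, i.e.\ that the subsequential limit $(X,Y)$ inherits the hypothesis $X_p+Y_q\sim\Po(p\muX+q\muY)$; this is exactly the step you name explicitly as ``continuity of thinning under weak convergence,'' and it deserves the one-line verification via generating functions that the paper omits. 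Your main route avoids this issue entirely by staying at finite $n$: the identity $\expec\big[z^{X_p^{\sss(n)}+Y_q^{\sss(n)}}\big]=G_n(pz+1-p,\,qz+1-q)$ together with the inversion $z=u$, $p=1$, $q=(1-v)/(1-u)$ (for $u\le v$, $u<1$, and symmetrically when $u>v$) converts the hypothesis into pointwise convergence of $G_n$ on $[0,1]^2$ to the probability generating function of the independent Poisson pair; your arithmetic ($pz+1-p=u$, $qz+1-q=v$, $q(z-1)=v-1$) and treatment of the boundary cases $u=1$ or $v=1$ check out. What each approach buys: the paper's proof is shorter and recycles Theorem~\ref{thm-PCW}, while yours is self-contained, needing neither the theorem nor the thinning-continuity lemma, at the price of invoking the multivariate continuity theorem for probability generating functions (pointwise convergence on $[0,1]^2$ to a proper generating function, continuous at $(1,1)$, implies joint weak convergence); since that theorem is itself usually proved by the same tightness-plus-subsequence argument, you should either cite it or include that short proof to make your route fully rigorous.
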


This could be proved directly using characteristic functions in a similar
way as in Theorem \ref{thm-PCW} (where instead moment generating functions
were used).  Instead, we deduce this from Theorem \ref{thm-PCW}.

\proof Taking $p=1$ and $q=0$ we find that $X^{\sss(n)}$ is a tight
sequence, and similarly so is $Y^{\sss(n)}$, and thus there are
subsequences of $(X^{\sss(n)},Y^{\sss(n)})$ that converge in distribution.
Let $(X,Y)$ be some subsequential limit.  By  Theorem \ref{thm-PCW} we find
that $(X,Y)$ are independent Poisson variables as claimed.  Since every
subsequential limit has the same law, the convergence is along the entire
sequence. 
\qed \bigskip

\subsection{Poisson approximation}
\label{sec-PA}
We will make extensive use of Poisson approximations. For this, we rely on \cite[Theorem 2.C]{BarHolJan92}, which we quote for convenience. We start by introducing some notation. Let 
	\eqn{
	W=\sum_{\alpha\in \Lambda} I_{\alpha}
	}
be a sum of (possibly dependent) indicator functions indexed by some set
$\Lambda$. Let $(J_{\beta\alpha})_{\beta\in \Lambda\setminus \{\alpha\}}$
be a collection of indicator variables with the joint distribution of 
$((I_{\beta})_{\beta\in \Lambda}\mid I_{\alpha}=1)$, i.e., the conditional distribution of all other indicators {\em given} that $I_{\alpha}=1$. Let $p_{\alpha}=\prob(I_{\alpha}=1)$ and
	\eqn{
	\lambda=\sum_{\alpha\in \Lambda} p_{\alpha}=\expec[W].
	}
Note that the while the joint distribution of the variables
$(J_{\beta\alpha})$ is specified, the coupling with the family $(I_\alpha)$
can be chosen arbitrarily.  Then we have the following Poisson approximation:

\begin{theorem}[Poisson approximations \cite{BarHolJan92}]
\label{thm-PA-sums}
With $(J_{\beta\alpha})_{\beta\in \Lambda\setminus \{\alpha\}}$ as above,
	\eqn{
	\label{dtv-bd}
	\dtv{\Law(W),\Po(\lambda)}\leq (1\wedge \lambda^{-1}) \Big(\sum_{\alpha\in \Lambda}p_{\alpha}^2+\sum_{\alpha\in \Lambda}\sum_{\beta\neq \alpha} p_{\alpha} \expec[|I_{\beta}-J_{\beta\alpha}|]\Big).
	}
\end{theorem}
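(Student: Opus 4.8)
The plan is to prove this bound by Stein's method for Poisson approximation, which is the engine behind \cite{BarHolJan92}. First I would record the test-function characterisation of total variation,
\[
\dtv{\Law(W),\Po(\lambda)}=\sup_{A\subseteq \Z_{\geq 0}}\big|\prob(W\in A)-\prob(Z\in A)\big|,
\qquad Z\sim\Po(\lambda),
\]
and, for each such $A$, introduce the Poisson Stein equation
\[
\lambda g(k+1)-k\,g(k)=\indic{k\in A}-\prob(Z\in A),
\]
which admits a bounded solution $g=g_A$. Substituting $k=W$ and taking expectations gives $\prob(W\in A)-\prob(Z\in A)=\expec[\lambda g_A(W+1)-W g_A(W)]$, so the whole problem reduces to bounding $\big|\expec[\lambda g(W+1)-W g(W)]\big|$ uniformly over the solutions $g=g_A$.

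The analytic heart of the argument, and the step I expect to be the main obstacle, is the Stein-factor estimate on the first difference of the solution,
\[
\|\Delta g_A\|_{\infty}:=\sup_{k\geq 0}\big|g_A(k+1)-g_A(k)\big|\leq \frac{1-\e^{-\lambda}}{\lambda}\leq 1\wedge\lambda^{-1}.
\]
This inequality is precisely the source of the prefactor $(1\wedge\lambda^{-1})$ in \eqref{dtv-bd}. I would obtain it from the explicit representation of $g_A$ as a weighted sum against the Poisson weights, exploiting monotonicity of the partial Poisson tails; this is the one genuinely delicate estimate, while the remaining steps are essentially bookkeeping.

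The probabilistic step is to unfold $\expec[W g(W)]$ using the conditional laws $(J_{\beta\alpha})$. Writing $W=\sum_{\alpha\in\Lambda}I_\alpha$, I would compute
\[
\expec[W g(W)]=\sum_{\alpha\in\Lambda}p_\alpha\,\expec\big[g(W)\mid I_\alpha=1\big]=\sum_{\alpha\in\Lambda}p_\alpha\,\expec\Big[g\Big(1+\sum_{\beta\neq\alpha}J_{\beta\alpha}\Big)\Big],
\]
where the second equality uses that, given $I_\alpha=1$, the remaining indicators are distributed as $(J_{\beta\alpha})_{\beta\neq\alpha}$ and $W=1+\sum_{\beta\neq\alpha}I_\beta$. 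Since $\lambda=\sum_{\alpha}p_\alpha$ gives $\expec[\lambda g(W+1)]=\sum_\alpha p_\alpha\expec[g(W+1)]$, subtracting yields
\[
\expec[\lambda g(W+1)-W g(W)]=\sum_{\alpha\in\Lambda}p_\alpha\,\expec\Big[g(W+1)-g\Big(1+\sum_{\beta\neq\alpha}J_{\beta\alpha}\Big)\Big].
\]

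Finally I would estimate each term by the Lipschitz bound. Fixing a coupling of $W$ with $(J_{\beta\alpha})_{\beta\neq\alpha}$ and writing $W+1=1+I_\alpha+\sum_{\beta\neq\alpha}I_\beta$, the two arguments differ by $I_\alpha+\sum_{\beta\neq\alpha}(I_\beta-J_{\beta\alpha})$, so
\[
\Big|\expec\Big[g(W+1)-g\Big(1+\sum_{\beta\neq\alpha}J_{\beta\alpha}\Big)\Big]\Big|\leq \|\Delta g\|_\infty\Big(p_\alpha+\sum_{\beta\neq\alpha}\expec\big[|I_\beta-J_{\beta\alpha}|\big]\Big),
\]
using $\expec[I_\alpha]=p_\alpha$. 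Multiplying by $p_\alpha$, summing over $\alpha$ to produce the $\sum_\alpha p_\alpha^2$ and $\sum_\alpha\sum_{\beta\neq\alpha}p_\alpha\expec[|I_\beta-J_{\beta\alpha}|]$ terms, and inserting the Stein-factor bound for $\|\Delta g\|_\infty$, gives exactly \eqref{dtv-bd}. The freedom to choose the coupling of $(J_{\beta\alpha})$ with $(I_\beta)$, noted in the statement, is what lets one minimise the second sum in applications.
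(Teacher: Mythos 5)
Your proof is correct, but note that the paper itself does not prove this statement: it is quoted as Theorem 2.C of Barbour, Holst and Janson \cite{BarHolJan92}, the paper's only addition being the remark that the coupling of $(J_{\beta\alpha})$ with $(I_\alpha)$ may be chosen arbitrarily. Your argument is essentially a faithful reconstruction of the proof in that reference: the Poisson Stein equation, the Barbour--Eagleson factor $\sup_k|g_A(k+1)-g_A(k)|\leq (1-\e^{-\lambda})/\lambda\leq 1\wedge \lambda^{-1}$, the identity $\expec[I_\alpha g(W)]=p_\alpha\expec\big[g\big(1+\sum_{\beta\neq\alpha}J_{\beta\alpha}\big)\big]$ coming from the definition of $(J_{\beta\alpha})$ as the conditional law given $I_\alpha=1$, and the termwise Lipschitz estimate valid under any coupling, which is exactly what makes the bound minimizable over couplings in applications. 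All steps are sound, including the two points that require care: the conditional-law substitution is legitimate because only the distribution of $\sum_{\beta\neq\alpha}J_{\beta\alpha}$ enters, and the absolute values in the final estimate correctly handle the fact that $I_\beta-J_{\beta\alpha}$ may take either sign.
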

\medskip

As these are indicator variables, we can compute that
$\expec[|I_{\beta}-J_{\beta\alpha}|]=\prob(I_{\beta}\neq J_{\beta\alpha})$.
Our proof is based on finding an efficient coupling of $I_{\beta}$ and
$J_{\alpha\beta}$, i.e., one for which $J_{\beta\alpha}=I_\beta$ holds with high probability.


\section{Couplings for the number of self-loops and multiple edges in the CM}
\label{sec-couplings}

In this section, we investigate $(S_n,M_n)$ as defined in
\eqref{Sn-Mn-def}. We will rely on Poisson approximations as in
Theorem~\ref{thm-PA-sums}, for which it is convenient to rewrite
$(S_n,M_n)$ as
	\eqn{
	\label{Sn-Mn-def-rep}
	S_n=\sum_{i\in[n]} \sum_{1\leq s< t\leq d_i} L_{st},
	\qquad\qquad
	M_n=\sum_{1\leq i<j\leq n}\sum_{1\leq s_1< s_2\leq d_i} \sum_{1\leq t_1\neq t_2\leq d_j} L_{s_1t_1,s_2t_2}.
	}
Here $L_{st}$ is the indicator that the half-edges $s$ and $t$ that are
incident to the same vertex are paired to form a self-loop, while
$L_{s_1t_1,s_2t_2}$ is the indicator that $s_1t_1$ and $s_2t_2$ are paired
together, where $s_1,s_2$ are incident to the same vertex, as are
$t_1,t_2$.  Note that we may assume $s_1<s_2$, but swapping $t_1,t_2$ will
lead to different configurations, so their order is not given.

We use the Poisson approximation in Theorem \ref{thm-PA-sums}, jointly with the Poisson Cram\'er-Wold device in Theorem \ref{thm-PCW} and thus deal with
	\eqn{
	\label{CW-variable}
	W=\sum_{i\in[n]} \sum_{1\leq s< t\leq d_i} L_{st}K_{st}
	+\sum_{1\leq i<j\leq n}\sum_{1\leq s_1< s_2\leq d_i} \sum_{1\leq t_1\neq t_2\leq d_j}L_{s_1t_1,s_2t_2}K_{s_1t_1,s_2t_2},
	}
where $(K_{st})$ are i.i.d.\ Bernoulli's with probability $p$ and
$(K_{s_1t_1,s_2t_2})$ are i.i.d.\ Bernoulli's with probability $q$. We need
to describe the law of the indicators conditioned on $I_{\alpha}=1$. Here
$\alpha$ can be $st$ or $s_1t_1,s_2t_2$ and
$I_{\alpha}=L_{\alpha}K_{\alpha}$. Note that the $K_{\alpha}$'s are
completely independent of everything else, so they do not change the
story.  For simplicity, we will just deal with $K_{\alpha}\equiv 1$, though
all computations below are valid for any set of $K$'s.

\paragraph{The success probabilities.}
We start by analyzing the ``success'' probabilities $p_{\alpha}$. For
$\alpha$ corresponding to a self-loop,
	\eqn{
	p_{\alpha}=\frac{1}{\ell_n-1},
	\qquad
	\text{where}
	\qquad
	\ell_n=\sum_{i\in[n]} d_i.
	}
For $\alpha$ corresponding to a pair of edges, 
	\eqn{
	p_{\alpha}=\frac{1}{(\ell_n-1)(\ell_n-3)}.
	}
This gives us that
	\eqan{
	\lambda&=\sum_{i\in[n]} \sum_{1\leq s\leq t\leq d_i}\frac{1}{\ell_n-1}+\sum_{1\leq i<j\leq n}\sum_{1\leq s_1\leq s_2\leq d_i} \sum_{1\leq t_1\neq t_2\leq d_j}\frac{1}{(\ell_n-1)(\ell_n-3)}\nn\\
	&=\frac{1}{2(\ell_n-1)}\sum_{i\in[n]} d_i(d_i-1)+ \frac{1}{4(\ell_n-1)(\ell_n-3)}\sum_{i\neq j \in[n]} d_i(d_i-1)d_j(d_j-1)\nn\\
	&=[\nu_n/2+\nu_n^2/4](1+o(1)).
	}
Further,
	\eqn{
          \sum_{\alpha}p_{\alpha}^2 =
          \left[\frac{\nu_n}{2\ell_n}+\frac{\nu_n^2}{4\ell_n^2}\right](1+o(1))
        = o(1).
	}
as long as $\dmax=o(n)$.  This bounds the easier term $\sum p_\alpha^2$ on
the right-hand side of \eqref{dtv-bd}.  We now turn to the more involved
contribution in the right-hand side of \eqref{dtv-bd}, for which the task
is to give a convenient and efficient description of the distribution of
$(J_{\beta\alpha})_{\beta}=((I_{\beta})_{\beta\in \Lambda}\mid
I_{\alpha}=1)$.  This means that we need to study the 
distribution of $L_{\alpha}$ {\em conditionally} on $L_{\beta}=1$.  More
precisely, below we define a coupling of the $L_\alpha$
and the conditioned $L_\alpha$ for each $\beta$.


Before describing the coupling in the different cases that can arise, we
make the following observation.  For some pairs $\alpha$,$\beta$ it is the
case that $L_\alpha=1$ and $L_\beta=1$ are incompatible.  This happens
whenever these events require the same half-edge to be matched in two
different ways.  In that case, conditioning on $L_\alpha=1$ makes
$L_\beta=0$.  In all other cases, it is easy to see that $L_\alpha$ and
$L_\beta$ are positively correlated.  For example, if both $\alpha,\beta$
are self-loop events, then $\E[L_\alpha L_\beta] =
\frac{1}{(\ell_n-1)(\ell_n-3)} > \frac{1}{(\ell_n-1)^2}$.  Similar
inequalities hold when one or both of the two are multiple edge events.
There are also pairs $\alpha,\beta$ which include a common matched pair, in
which case the correlation is very strong.
In light of this positive correlation, for any compatible pair
$\alpha,\beta$, the conditioned $L_\beta$ stochastically dominates the
unconditioned $L_\beta$.  Our coupling realizes this stochastic
domination: If $\alpha,\beta$ are compatible, then forcing $L_\alpha$ can
only increase $L_\beta$.

Since $\alpha$ and $\beta$ can be of two distinct types, corresponding to
self-loops and multiple edges, this gives rise to four different cases.  We
start with the conditional law of $L_{s't'}$ conditionally on $L_{st}=1$:

  
\subsection*{(a) Conditional law of $L_{s't'}$ conditionally on
  $L_{st}=1$.}
To create the conditional law of $(J_{\beta\alpha})$, which is the same as
the joint law of $(L_{\beta})$ given $L_{\alpha}=1$ with $\alpha=st$, we
start with $\CMnd$, giving us the unconditional law of $(L_{\beta})$. When
the half-edges $s$ and $t$ have been paired to one another, we do nothing,
because then $L_{st}=1$ already. When $L_{st}=0$, we break open the two
edges containing $s$ and $t$ respectively, pair $s$ and $t$, and pair the
two other half-edges that are now unpaired to each other.  We refer to this
as \emph{rewiring to create $\alpha$}. It is clear that
the resulting graph is $\CMnd$ conditioned on half-edges $s,t$ being
matched, and thus produces the required distribution $(J_{\beta\alpha})$,
and also couples it with the unconditioned law $(L_{\beta})$.  We now
compute $\expec[|L_{\beta}-J_{\beta\alpha}|] = \prob(L_{\beta} \neq
J_{\beta\alpha})$ (and in this first case we assume that $\beta=s't'$
corresponds to a self-loop).

We note that $J_{\beta\alpha}=L_{\beta}$, unless the self-loop $\beta$ is
present and is destroyed, or the self-loop $\beta$ is absent and is
created.  We have two different cases depending on whether $\alpha$ and
$\beta$ are incident to two distinct vertices or they are incident to the
same vertex. We will now examine the contributions to each of these two
cases.

\paragraph{Case (a1): The self-loops $\alpha$ and  $\beta$ are incident to
  the distinct vertices $i$ and $j$:}
We start with the case where $\alpha=st$ and $\beta=s't'$ are incident to
two distinct vertices $i$ and $j$.  We first note that rewiring can never
destroy the self-loop $\beta$, since the half-edges $s'$ and $t'$ that are
incident to the vertex $j$ can only be affected if before the rewiring they
were paired to the half-edges $s$ and $t$ that are incident to the vertex
$i$.  From this fact it also follows that $\beta$ is created exactly if
before the rewiring, the half-edges $s'$ and $t'$ in $\beta$ are paired (in
some order) to the half-edges $s$ and $t$ in $\alpha$.  This has
probability $\frac{2}{(\ell_n-1)(\ell_n-3)}$.  Note that for two distinct
vertices $i$ and $j$, there are $\binom{d_i}{2}\binom{d_j}{2}$ choices for
the pair $(s,t)$ incident to $i$ and the pair $(s',t')$ incident to $j$.


\paragraph{Case (a2): The self-loops $\alpha$ and $\beta$ are incident
  to the same vertex $i$, and are disjoint:} 
We now consider the case where $\alpha=st$ and $\beta=s't'$ are incident to
the same vertex $i$, and do not share any half-edge, so that
$\{s,t\}\cap \{s',t'\}=\varnothing$.  As in case (a1), rewiring can not
destroy the self-loop $s't'$, and creates the self-loop $s't'$ precisely
when $s,t$ are paired to $\{s',t'\}$ in some order.  This occurs with
probability $\frac{2}{(\ell_n-1)(\ell_n-3)}$.  Note that there are
$6\binom{d_i}{4}$ choices for half-edges $s,t$ and half-edges $s',t'$ incident to 
the vertex $i$.

\paragraph{Case (a3): The self-loops $\alpha$ and $\beta$ are incident
  to the same vertex $i$, and overlap.} 
Consider finally the case of self loops with
$\{s,t\}\cap \{s',t'\}\neq \varnothing$.  If $s't'$ is a self-loop before
rewiring, it is destroyed by the rewiring. This occurs with probability
$\frac{1}{(\ell_n-1)}$.  Note that there are $6\binom{d_i}{3}$ choices for
the pairs $\{s,t\}$ and $\{s',t'\}$ with an overlap.

\bigskip

Recall the notation $(m)_k=m(m-1)\cdots(m-k+1)$.
Using that $p_{\alpha}=1/(\ell_n-1)$, the total contribution from cases
(a1)--(a3) to the second sum in \eqref{dtv-bd} is thus equal to
	\eqan{\label{case(a)}
  	&\sum_{i\neq j\in[n]} \frac{(d_i)_2(d_j)_2}{4} \cdot
  	\frac{2}{(\ell_n-1)^2(\ell_n-3)}
  	+\sum_{i\in[n]} \frac{(d_i)_4}{4}\cdot \frac{2}{(\ell_n-1)^2(\ell_n-3)}
  	+ \sum_{i\in[n]} (d_i)_3 \cdot \frac{1}{(\ell_n-1)^2} \\
  	&\qquad\qquad\qquad= O(\nu_n^2/\ell_n) + O(\mu_n^{\sss(3)}/\ell_n),\nn
	}
where the first sum corresponds to the total contribution for the case when
$\alpha$ and $\beta$ are incident to two distinct vertices $i$ and $j$, and
the last two sums correspond to the total contribution for the case when
$\alpha$ and $\beta$ are incident to the same vertex $i$.

\subsection*{(b) Conditional law of $L_{s_1't_1', s_2't_2'}$ conditionally
  on $L_{st}=1$.}
Continuing our analysis of the above coupling, we now consider
$\beta=\{s_1't_1', s_2't_2'\}$ corresponding to a pair of parallel edges.
We have different cases depending on whether the half-edges in $\alpha$ and
$\beta$ are incident to three distinct vertices or only two different
vertices.  We will now examine the contributions to each of these two
cases.

\paragraph{Case (b1): The self-loop $\alpha$ and the multiple-edge $\beta$ are incident to three vertices:}
We start with the case where $\alpha=st$ is incident to a vertex $i$, and
$\beta=\{s_1't_1', s_2't_2'\}$ is incident to two other vertices, so that
$s_1',s_2'$ are incident to vertex $j$ and the pair $t_1', t_2'$ are
incident to vertex $k$, with $\{i,j,k\}$ all distinct.  Note that (as in
case (a1)), rewiring can not destroy the multiple edge $\beta$, since
$\beta$ is only affected if there is some half-edge in $\beta$ that is
paired to some half-edge in $\alpha$ before the rewiring, in that case
$\beta$ could not have been present in $\CMnd$.  However, rewiring can
create the multiple edge $\beta$.  This occurs when before the rewiring
either the edge $s_1't_1'$ or the edge $s_2't_2'$ already existed and the
half-edges $s$ and $t$ are paired to the two remaining half-edges from
$\beta$; see Figure \ref{rewire1} for an illustration.
\begin{figure}
\includegraphics{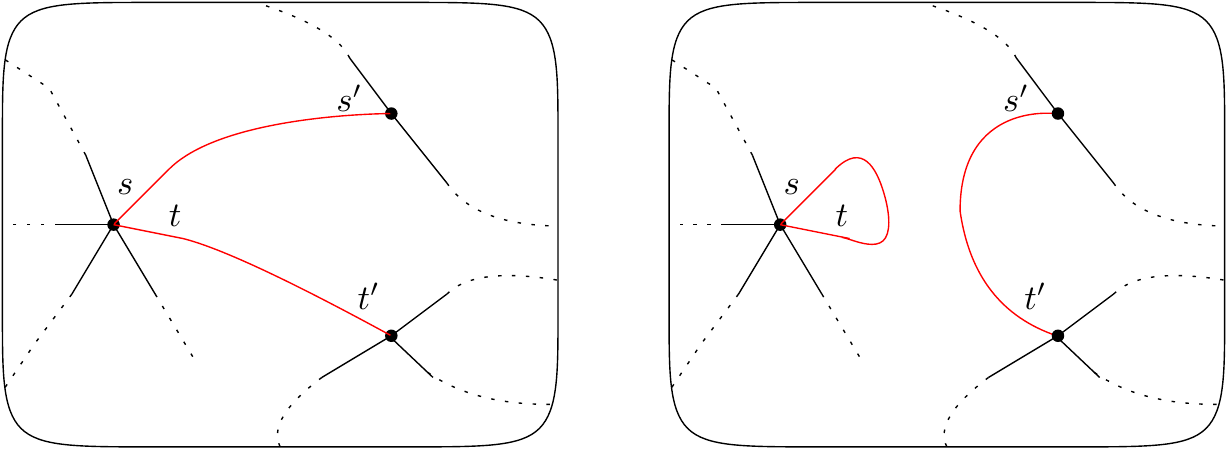}
\caption{Rewiring the left configuration so as to create the self-loop 
with half-edges $\{s,t\}$ results in creation of the edge $\{s',t'\}$. 
If a second edge $\{s'_2,t'_2\}$ is already present (not shown) then a 
multiple edge will be formed.}\label{rewire1}
\end{figure} 

  We thus have four symmetric cases: One of these cases is when $s$
was paired to $s_1'$ and $t$ to $t_1'$, while $s_2'$ was paired to $t_2'$.
Thus, the total probability for these four symmetric cases is
$\frac{4}{(\ell_n-1)(\ell_n-3)(\ell_n-5)}$.  Note that there are
$\binom{d_i}{2}$ choices for the pair of half-edges $(s,t)$ in the vertex
$i$ and then $2\binom{d_j}{2}\binom{d_k}{2}$ to choose the multiple edge
$\beta=\{s_1't_1', s_2't_2'\}$ between vertices $j$ and $k$.  When we sum
over all vertices $i,j$ and $ k$, we could either assume that $j<k$ or
divide the total sum by 2, since we can permute $j$ and $k$. In total, using that $p_{\alpha}=1/(\ell_n-1)$,  the contribution
from Case (b1)	to the second sum in \eqref{dtv-bd} is
	\eqn{
	\sum_{i\neq j\neq k}\frac{(d_i)_2(d_j)_2(d_k)_2}{8}\cdot \frac{4}{(\ell_n-1)^2(\ell_n-3)(\ell_n-5)}=\frac{6(d_i)_2(d_j)_2(d_k)_2}{2(\ell_n-1)^2(\ell_n-3)(\ell_n-5)}.
      	}
Here, the $6$ in the first term comes from the possible orders of $i,j,k$.
      
\paragraph{Case (b2): The self-loop $\alpha$ and the multiple-edge $\beta$
  are incident to two vertices:}
We now consider the case when $\alpha=st$ and
$\beta=\{s_1't_1', s_2't_2'\}$ are incident to only two distinct vertices.
Specifically, we assume that both the half-edges $s,t$ and $s_1',s_2'$ are
all incident to the vertex $i$, while the half-edges $t_1',t_2'$ are
incident to a different vertex $j$.  This is split into three sub-cases,
depending on whether $\{s,t\}$ and $\{s'_1,s'_2\}$ have zero, one, or two
elements in common.

If $\{s_1',s_2'\}\cap \{s,t\} = \varnothing$ then we can not destroy
$\beta$ since no half-edge in $\beta$ could have been paired to $s$ or $t$
before the rewiring. However, we can create $\beta$. This again occurs when
before the rewiring either the edge $s_1't_1'$ or the edge $s_2't_2'$
already existed and the half-edges $s$ and $t$ are paired to the two
remaining half-edges in $\beta$.  The total probability for these four
symmetric cases is the same as in case (b1):
$\frac{4}{(\ell_n-1)(\ell_n-3)(\ell_n-5)}$.  Note that there are
$2\binom{d_i}{2} \binom{d_j}{2}\binom{d_i-2}{2}$ choices for the multiple
edge $\beta$ incident to the vertices $i$ and $j$ and the self-loop
$\alpha$ incident to vertex $i$.

If $\{s_1',s_2'\}\cap \{s,t\} \neq \varnothing$ then rewiring can not
create $\beta$, since one of the half-edges in $\beta$ must be part of the
self-loop $\alpha$ after the rewiring.  In case $\{s_1',s_2',s,t\}$ are
three distinct half-edges so that $s=s_1'$ or $s=s_2'$, then we destroy
$\beta$ if $\beta$ existed before the rewiring (while the final half-edge
$t$ incident to vertex $i$ was paired to an arbitrary half-edge).  These two
symmetric cases thus have probability
$\frac{2}{(\ell_n-1)(\ell_n-3)}$, and there are
$ 2\binom{d_i}{2} \binom{d_j}{2}(d_i-2)$ choices for the multiple
edge $\beta$ incident to the vertices $i$ and $j$ and the remaining
half-edge $t$ in the self-loop $ \alpha $ incident to vertex $i$.

Finally we consider the case when $s=s_1'$ and $t=s_2'$.  Then we destroy
$\beta$ if $\beta$ existed before the rewiring, which occurs with
probability $\frac{1}{(\ell_n-1)(\ell_n-3)}$. Note that there are
$ 2\binom{d_i}{2} \binom{d_j}{2}$ choices for the multiple edge $\beta$
incident to the vertices $i$ and $j$ and then the self-loop $ \alpha $
incident to vertex $i$ is also decided from that choice.

\bigskip

In total, again using $p_{\alpha}=1/(\ell_n-1)$,  the contribution
from Cases (b1) and (b2) to the second sum in \eqref{dtv-bd} is
	\eqan{\label{case(b)}
  	&\sum_{i<j<k\in[n]}
  	\frac{6(d_i)_2(d_j)_2(d_k)_2}{2(\ell_n-1)^2(\ell_n-3)(\ell_n-5)}+\sum_{i\neq j\in[n]}
  	\frac{(d_i)_4(d_j)_2}{(\ell_n-1)^2(\ell_n-3)(\ell_n-5)}+\sum_{i\neq
    	j\in[n]} \frac{2(d_i)_3(d_j)_2+(d_i)_2(d_j)_2}{2(\ell_n-1)^2(\ell_n-3)}
  	\\
  	&\qquad\qquad\qquad= O(\nu_n^3/\ell_n)+ O(\mu_n^{\sss(4)}\nu_n/\ell_n^2)+O(\mu_n^{\sss(3)}\nu_n/\ell_n).\nn
	}

Note that in this paper we do not consider the joint distribution of $S_n$
and $M_n$ when $\nu_n\rightarrow\infty$, so this term will only be used for
$\nu_n=O(1)$.

\subsection*{(c) Conditional law of $L_{s't'}$ conditionally on
  $L_{s_1t_1,s_2t_2}=1$.}

To deal with Case (c), we rely on symmetry that is present in our setting.
The simple observation in the lemma below is described in \cite[p.25]{BarHolJan92}, but we prove it for completeness.
\begin{lemma}[Symmetry] With the notation in Section \ref{sec-PA},
  \[
    p_\alpha \E[I_\beta-J_{\beta\alpha}] = p_\beta
    \E[I_\alpha-J_{\alpha\beta}] = -\mathrm{Cov}(I_\alpha,I_\beta).
  \]
\end{lemma}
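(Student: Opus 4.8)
The plan is to reduce everything to a one-line computation of expectations, exploiting the fact that the quantity $\E[I_\beta-J_{\beta\alpha}]$ is \emph{linear} and therefore completely insensitive to the (arbitrary) coupling between the family $(I_\gamma)_{\gamma}$ and the conditioned family $(J_{\gamma\alpha})_{\gamma}$. All that is actually needed is the marginal law of $J_{\beta\alpha}$, which by the definition in Section \ref{sec-PA} is the law of $I_\beta$ under the conditioning $I_\alpha=1$.

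First I would record that, since $(J_{\gamma\alpha})_{\gamma}$ has the joint distribution of $((I_\gamma)_{\gamma}\mid I_\alpha=1)$, in particular
\[
\E[J_{\beta\alpha}]=\E[I_\beta\mid I_\alpha=1]=\prob(I_\beta=1\mid I_\alpha=1)=\frac{\prob(I_\alpha=1,\,I_\beta=1)}{p_\alpha}=\frac{\E[I_\alpha I_\beta]}{p_\alpha},
\]
where the middle identity uses that $I_\beta$ is a $\{0,1\}$-valued indicator. Consequently, by linearity of expectation and $p_\gamma=\E[I_\gamma]$,
\[
p_\alpha\,\E[I_\beta-J_{\beta\alpha}]=p_\alpha p_\beta-\E[I_\alpha I_\beta]=-\big(\E[I_\alpha I_\beta]-\E[I_\alpha]\E[I_\beta]\big)=-\mathrm{Cov}(I_\alpha,I_\beta).
\]

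The final step is to observe that the right-hand side $-\mathrm{Cov}(I_\alpha,I_\beta)$ is manifestly symmetric under interchanging $\alpha$ and $\beta$. Repeating the identical computation with the roles of $\alpha$ and $\beta$ swapped yields $p_\beta\,\E[I_\alpha-J_{\alpha\beta}]=-\mathrm{Cov}(I_\beta,I_\alpha)=-\mathrm{Cov}(I_\alpha,I_\beta)$, which closes the chain of equalities. I expect there to be no genuine obstacle here: the only point requiring care is \emph{not} to be distracted by the coupling, since the phrasing might suggest one must control the joint law of $I_\beta$ and $J_{\beta\alpha}$, whereas in fact only their separate expectations enter, and the conditional-probability identity $\E[J_{\beta\alpha}]=\E[I_\alpha I_\beta]/p_\alpha$ does all the work.
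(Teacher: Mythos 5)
Your proof is correct and follows essentially the same route as the paper's: both compute $p_\alpha\E[J_{\beta\alpha}]=\prob(I_\alpha=I_\beta=1)=\E[I_\alpha I_\beta]$ from the defining conditional distribution, subtract $p_\alpha\E[I_\beta]=p_\alpha p_\beta$ to identify the quantity as $-\mathrm{Cov}(I_\alpha,I_\beta)$, and conclude by symmetry of the covariance. Your remark that the coupling is irrelevant because only marginal expectations enter is a nice explicit articulation of a point the paper leaves implicit.
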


\begin{proof}
  We have that $p_\alpha\E[I_\beta] = p_\alpha p_\beta$, and
    	$$
	p_\alpha \E[J_{\beta\alpha}] = \prob(I_\alpha=I_\beta=1)=\E[I_\alpha I_\beta].
	$$  
Thus, the difference is the covariance (multiplied by -1) and is invariant to swapping $\alpha$ and $\beta$,
\end{proof}

In our setting, for compatible $\alpha$ and $\beta$ the difference
$I_\beta-J_{\beta\alpha}$ is never positive, while for incompatible
$\alpha,\beta$ it is never negative.  Thus, 
	\[
	p_\alpha \big|\E[I_\beta-J_{\beta\alpha}]\big|=p_\alpha \E[|I_\beta-J_{\beta\alpha}|].
	\]
We conclude that $p_\alpha \E[|I_\beta-J_{\beta\alpha}|]$ is also invariant to swapping $\alpha$ and $\beta$.  In particular the sum
over self-loops $\alpha$ and multiple edges $\beta$ is the same as the sum over multiple edges $\alpha$ and self-loops $\beta$, and thus the contribution from Case (c) is {\it equal} to the contribution from Case (b).

\subsection*{(d) Conditional law of $L_{s_1't_1', s_2't_2'}$ conditionally
  on $L_{s_1t_1,s_2t_2}=1$.}
We now turn our coupling to the case where $\alpha=\{s_1t_1,s_2t_2\}$ is a pair of
parallel edges.  We rewire $\CMnd$ to create the coupled variables
$(J_{\beta\alpha})$, with the joint law of $(L_{\beta})$ given
$L_{\alpha}=1$.  Start with $\CMnd$, giving us the unconditioned
$(L_{\beta})$.  If the pairs of half-edges $(s_1,t_1)$ and $(s_2,t_2)$ are
already paired, then $L_{s_1t_1,s_2t_2}=1$ already, and there is nothing to
be done.

When $L_{s_1t_1,s_2t_2}=0$, we break open all the edges containing
$s_1,s_2,t_1,t_2$.  This leaves these and at most four additional half-edges
unmatched.  We then pair $s_1$ to $t_1$ and pair $s_2$ to $t_2$.  The
additional unmatched half-edges (of which there are zero, two, or four) are paired randomly.
This produces $(J_{\beta\alpha})$ with the needed distribution, coupled 
with the original $(L_{\beta})$.  We shall now estimate
$\expec[|L_{\beta}-J_{\beta\alpha}|]$.  We note that
$J_{\beta\alpha}=L_{\beta}$, unless the multiple-edge $\beta$ is present
and is destroyed, or the multiple-edge $\beta$ is absent and is created.
Note that we have several cases depending on how the multiple-edges
$\alpha$ and $\beta$ intersect, and whether they are incident to two, three
or four distinct vertices.

\paragraph{Case (d1): The multiple edges $\alpha$ and $\beta$ are incident
  to four distinct vertices:} 
We start with the case when $\alpha$ and $\beta$ are incident to four
different vertices, $\alpha$ to $i,j$, and $\beta$ to $k,l$.  Note that in
this case rewiring can not destroy the multiple-edge $\beta$, since if
$\beta$ existed then the half-edges in $\beta$ could not have been
paired to the half-edges in $\alpha$ before the rewiring.
However, rewiring can
create $\beta$.  This can happen in two different ways.  In the first way,
all half-edges in $\alpha$ are paired to all half-edges in $ \beta $,
which just has the probability
	\begin{align}\label{P1}
	P_1=\frac{4!}{(\ell_n-1)(\ell_n-3)(\ell_n-5)(\ell_n-7)}.
	\end{align} 
The second way $\beta$ can be created is if one of the edges of $\beta$ was
present before rewiring, while the remaining two half-edges in $\beta$ were
paired to two-half-edges in $\alpha$ (the remaining two half-edges in
$\alpha$ can be paired arbitrarily). 
 
This has probability
	\begin{align}\label{P2}
	P_2=\frac{2\cdot 4\cdot 3}{(\ell_n-1)(\ell_n-3)(\ell_n-5)}.
	\end{align}

Note that there are
\[
  4\binom{d_i}{2}\binom{d_j}{2}\binom{d_k}{2}\binom{d_l}{2}
\]
ways to choose $\alpha=\{s_1t_1, s_2t_2\}$ incident to vertices $i$ and $j$,
and $\beta=\{s_1't_1', s_2't_2'\}$ incident to vertices $k$ and $l$.  (Also
note that in both of the cases just described, after the rewiring we could
possibly have created $\beta$, but in neither of the cases it is certain
that $\beta$ has been created.) See Figure \ref{rewire2} for an illustration of these two possibilities for rewiring edges.
	\begin{figure}
\includegraphics{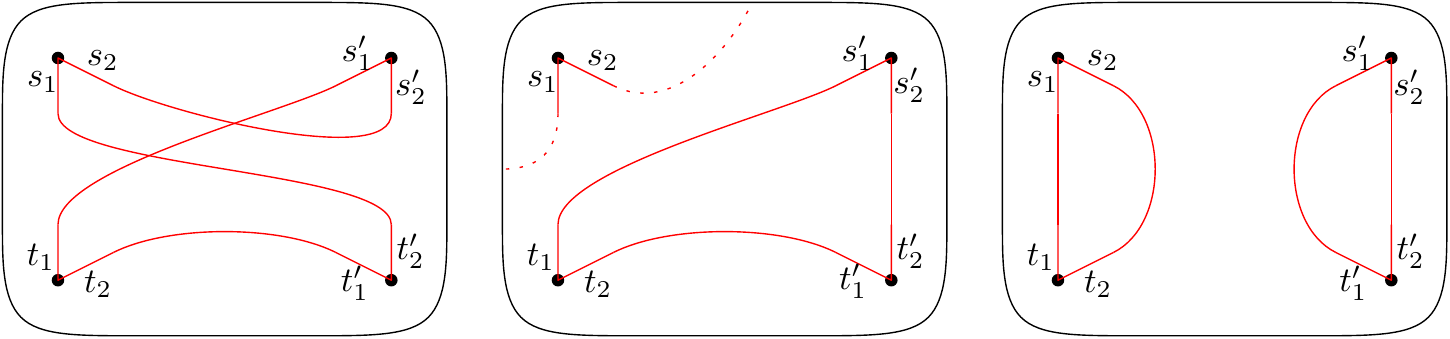}
\caption{In the left and centre configurations, rewiring to create the 
parallel edges $\{s_1t_1,s_2t_2\}$ between the two left vertices will 
create (with positive probability) the parallel edges 
$\{s'_1t'_1,s'_2t'_2\}$ between the right vertices, as shown on the 
right. Other half-edges on the same vertices are not shown.}\label{rewire2}\end{figure}

 When we sum over all vertices $i,j,k$ and
$l$ we have to divide the total sum by 4 similarly as we divided by 2 in
the previous Case (b) when there were three vertices that were
incident to $\alpha$ and $\beta$.

Using that $p_{\alpha}=\frac{1}{(\ell_n-1)(\ell_n-3)}$, with a factor of
$24$ for permuting $i,j,k,l$, we find that the total contribution to the
second sum in \eqref{dtv-bd} due to case (d1) is bounded by
\eqn{
  \label{case(dcase1)}
	24\sum_{i<j<k<l \in[n]} \frac{(d_i)_2(d_j)_2(d_k)_2(d_l)_2}{16}
        p_\alpha (P_1+P_2) = O(\nu_n^4/\ell_n).
	}

\paragraph{Case (d2): The multiple edges $\alpha$ and $\beta$ are incident
  to three distinct vertices:} 
We continue with the case when $\alpha$ and $\beta$ are incident to only
three different vertices $i,j$ and $i,k$.  We can assume that $s_1,s_2$ and
$s_1',s_2'$ are incident to vertex $i$, that $t_1,t_2$ are incident to $j$
and $t_1',t_2'$ are incident to $k$.  There are sub-cases, according to the
number of common half-edges among $\alpha$ and $\beta$.

When $\{s_1, s_2\} \cap \{s_1',s_2'\} = \varnothing$, we can not destroy the
multiple-edge $\beta$, since we have eight different half-edges in $\alpha$
and $ \beta$. In this case we can again create $\beta$ if the half-edges are paired as
described in the previous case i.e., with probability $P_1$ in (\ref{P1})
and probability $P_2$ in (\ref{P2}) respectively (there is a possibility
that $\beta$ is created).  Note that there
are 
	$$
	4\binom{d_i}{2}\binom{d_i-2}{2}\binom{d_j}{2}\binom{d_k}{2}
	$$ 
ways to
choose $\alpha=\{s_1t_1, s_2t_2\}$ incident to vertices $i$ and $j$ and
$\beta=\{s_1't_1', s_2't_2'\}$ incident to vertices $i$ and $k$.  If
$\{s_1,s_2\} \cap \{s_1', s_2'\} \neq \varnothing$, then we can not create
the multiple edge $ \beta $ incident to the vertices $ i $ and $ k $ since
after the rewiring at least one of the half-edges $ s_1',s_2' $ in
$ \beta $ is paired to a half-edge in $\alpha$ that is incident to the
vertex $ j$. However, when $ \beta $ existed before the rewiring, it is
destroyed by the same reason.  Hence, we have two possibilities for this to
happen i.e., $|\{s_1,s_2\} \cap \{s_1', s_2'\}|$ is equal to 1 or 2. The multiple edge
$ \beta $ exists with probability
$ \frac{1}{(\ell_n-1)(\ell_n-3)} $. Note that in the case when $\alpha$
and $ \beta $ contain three distinct half-edges incident to $i$ and two
distinct half-edges incident to $j$ and $k$, respectively, we have
	$$
	8\binom{d_i}{2}(d_i-2)\binom{d_j}{2}\binom{d_k}{2}
	$$ 
ways to choose $\alpha$ and $\beta$, whereas in the case when  $\alpha$ and $ \beta $ contain two distinct half-edges incident to $i$, $j$ and $k$, respectively, we have 
	$$
	4\binom{d_i}{2}\binom{d_j}{2}\binom{d_k}{2}
	$$ 
ways to choose $\alpha$ and $\beta$.
Using that $p_{\alpha}=\frac{1}{(\ell_n-1)(\ell_n-3)}$, the total contribution to the second sum in \eqref{dtv-bd} due to case (d2)
is thus bounded by
	\eqan{
	\label{case(dcase2new)}
  	&\sum_{i\neq j\neq k \in[n]} \frac{(d_i)_4(d_j)_2(d_k)_2}{4} p_\alpha
  	(P_1+P_2)
  	+ \sum_{i\neq j\neq k \in[n]} \dfrac{2(d_i)_3(d_j)_2(d_k)_2+
    	(d_i)_2(d_j)_2(d_k)_2}{(\ell_n-1)^2(\ell_n-3)^2} \\
  	&\qquad\qquad\qquad= O(\mu_n^{\sss(4)}\nu_n^2/\ell_n^2)+O(\mu_n^{\sss(3)}\nu_n^2/\ell_n)+O(\nu_n^3/\ell_n).\nn
	}

\paragraph{Case (d3): The multiple edges $\alpha$ and $\beta$ are
  compatible and incident to two vertices:}
We finally consider the case when $\alpha$ and $\beta$ are incident to two
different vertices $i$ and $j$.  In this case rewiring can both create and
destroy $\beta$ when forcing $\alpha$.  In the case when all the eight
half-edges are distinct we can again not destroy $\beta$.  As in case (d1)
and (d2) when there were eight distinct half-edges in $\alpha$ and $\beta$
and as described above there were two different scenarios when $\beta$
could possibly be created, i.e., the first scenario has probability $P_1$
in (\ref{P1}) and the second scenario has probability $P_2$ in (\ref{P2}).
Note that there are
\[
  4\binom{d_i}{2}\binom{d_i-2}{2}\binom{d_j}{2}\binom{d_j-2}{2} = \frac{(d_i)_4(d_j)_4}{4}
\]
ways to choose $\alpha$ and $\beta$ incident to the vertices $i$ and $j$. In
this case, $\beta$ is created with probability $1/3$ (as there are three
ways of rewiring the loose half-edges).

There is one other case where $\beta$ can be created by rewiring, namely
when $\alpha$ and $\beta$ share a common edge (e.g.\ $s'_1=s_1$ and
$t'_1=t_1$, while the remaining half-edges are all distinct.  In that case,
rewiring will create $\beta$ precisely when $(s'_2,t'_2)$ are joined prior
to rewiring, which has probability $1/(\ell_n-1)$.  The number of
pairs $\alpha,\beta$ in this class is
\[
  (d_i)_3 (d_j)_3 .
\]

The total contribution from case (d3) is therefore 
\begin{equation}
  \label{case(dcase2)}
  \sum_{i<j \in[n]} 
  \frac{(d_i)_4(d_j)_4}{4} p_\alpha (P_1+P_2) +  \sum_{i<j \in[n]} (d_i)_3 (d_j)_3 p_\alpha
  \cdot\frac{1}{\ell_n-1}
  = O((\mu_n^{\sss(4)})^2/\ell_n^3) +  O((\mu_n^{\sss(3)})^2/\ell_n).
\end{equation}

\paragraph{Case (d4): The multiple edges $\alpha$ and $\beta$ are
  incompatible and incident to only two vertices:}
In all other configurations of $\alpha$ and $\beta$ involving only two
vertices $i,j$, $\alpha$ and $\beta$ cannot coexist, and so rewiring
destroys $\beta$ whenever it is present, which occurs with probability
$p_\beta = \frac{1}{(\ell_n-1)(\ell_n-3)}$.

This can occur in various ways: Using four half-edges from $i$ and two or
three from $j$ (or the other way around), or having some overlap in both
$i$ and $j$.  Instead of carefully enumerating all the ways this can
happen, let us just observe that the number is dominated by
$O(d_i^4 d_j^3 + d_i^3 d_j^4)$, since at most three half-edges are chosen
at one vertex and at most four at the other.  Since $i,j$ may be swapped,
the total contribution from this case is at most
\begin{equation}
  \label{case(dcase2number)}
  \sum_{i\neq j \in[n]} C d_i^3 d_j^4 p_\alpha p_\beta
  = O(\mu_n^{\sss(4)} \mu_n^{\sss(3)}/\ell_n^2).
\end{equation}

\bigskip

In total, the contribution due to case (d) is thus equal to 
\eqn{
	\label{case(dpart1)}
	O(\nu_n^4/\ell_n)+O(\mu_n^{\sss(4)}\nu_n^2/\ell_n^2)+O(\mu_n^{\sss(3)}\nu_n^2/\ell_n)+O(\nu_n^3/\ell_n)+O((\mu_n^{\sss(3)})^2/\ell_n)
	= O(1) \frac{\nu_n^4+(\mu_n^{\sss(3)})^2}{\ell_n}.
      }
Here we use that $\nu_n^2\mu_n^{\sss(3)}\leq \nu_n^4+(\mu_n^{\sss(3)})^2$
and $\nu_n=O(\ell_n)$.  Thus, we note that the largest contributions in
case (d) are due to one sum that appears in case (d1) and one sum that
appears in case (d3), i.e.,
\eqn{
	\label{case(d)}
	O(1)\sum_{i\neq j\neq k\neq l \in[n]} \frac{(d_i)_2(d_j)_2(d_k)_2(d_l)_2}{(\ell_n-1)^2(\ell_n-3)^2(\ell_n-5)}+O(1)\sum_{i\neq j \in[n]}\frac{(d_i)_3(d_j)_3}{(\ell_n-1)^2(\ell_n-3)}
	=O(1) \frac{\nu_n^4+(\mu_n^{\sss(3)})^2}{\ell_n}.
	}

\section{Proofs of main theorems}
\label{sec-proofs}

\subsection{Proofs for configuration model}
	
\paragraph{Conclusion to the proof of Theorem \ref{thm-PA-general}.}
To conclude the proof of Theorem \ref{thm-PA-general}, we distinguish
between the proofs of \eqref{thm-1}, \eqref{thm-2} and \eqref{thm-3}, and
note that \eqref{thm-4} is a direct consequence of \eqref{thm-3}. For each
of these cases, we need to sum up the corresponding contributions in the
above cases (a)-(d).  \medskip

To prove \eqref{thm-1}, we only need to consider the contribution due to
case (a), which is $O(\nu_n^2/\ell_n)$. The contribution due to
$\sum_\alpha p_\alpha^2$ equals $O(\nu_n/\ell_n)=O(\nu_n^2/\ell_n)$, while
$\lambdanS=(\nu_n/2)(1+O(1/n))$. Thus, Theorem \ref{thm-PA-sums} gives that
	\eqn{
	\dtv{\Law(S_n)-\Po(\lambdanS)}\leq \frac{O(1)}{(\nu_n/2\vee 1)}\frac{\nu_n^2}{\ell_n},
	}
which completes the proof of \eqref{thm-1}.

To prove \eqref{thm-2}, we only need to consider the contribution due to
case (d), which is $O((\nu_n^4+(\mu_n^{\sss(3)})^2)/\ell_n)$. The
contribution due to $\sum_\alpha p_\alpha^2=\lambdanM/(\ell_n-1)(\ell_n-3)$
equals $O(\nu_n^2/\ell_n^2)$. Thus, Theorem \ref{thm-PA-sums} gives that
	\eqn{
	\dtv{\Law(M_n)-\Po(\lambdanM)}\leq \frac{O(1)}{(\lambdanM\vee 1)}\frac{\nu_n^4+(\mu_n^{\sss(3)})^2}{\ell_n},
	}
which completes the proof of \eqref{thm-2}.

To prove \eqref{thm-3}, we need to consider the contribution due to cases
(a)-(d), which is $O((\nu_n^4+(\mu_n^{\sss(3)})^2)/\ell_n)$. The
contribution due to $\sum_\alpha
p_\alpha^2=\lambdanS/(\ell_n-1)+\lambdanM/(\ell_n-1)(\ell_n-3)$ equals
$O(\nu_n/\ell_n)$. Thus, Theorem \ref{thm-PA-sums} gives that
	\eqn{
	\dtv{\Law(S_n+M_n)-\Po(\lambdanS+\lambdanM)}\leq \frac{O(1)}{((\lambdanS+\lambdanM)\vee 1)}\frac{\nu_n^4+(\mu_n^{\sss(3)})^2}{\ell_n},
	}
which completes the proof of \eqref{thm-3}, and thus of Theorem \ref{thm-PA-general}.
\qed

\paragraph{Conclusion to the proof of Theorem \ref{thm-PA-fin-var}.}
For Theorem \ref{thm-PA-fin-var}, we use the Poisson approximation for $W$ in \eqref{CW-variable}, and rely on Corollary \ref{cor-PCW}. 
Since $\lim_{n\rightarrow \infty}\expec[D_n^2]=\expec[D^2]<\infty,$ $\dmax=o(\sqrt{n})$, so that $(\mu_n^{\sss(3)})^2/\ell_n\leq \dmax^2\nu_n^2/\ell_n=o(1)$. Thus, $W\convd W_p$, which has a Poisson distribution with parameter $p\nu/2+q\nu^2/4$, so that the assumptions in Corollary \ref{cor-PCW} are satisfied. We conclude that $(S_n,M_n)\convd (S,M)$, where $S$ and $M$ are independent Poisson variables with parameters $\nu/2$ and $\nu^2/4$ respectively. This implies that $\dtv{\Law(S_n,M_n)-\Po(\nu/2)\otimes \Po(\nu^2/4)}\to 0$, since for integer-valued random vectors, the two notions of convergence are equivalent.
\qed

\paragraph{Conclusion to the proof of Theorems \ref{thm-PA-fin-var-b}--\ref{thm-PA-fin-var-c}.}
For Theorem \ref{thm-PA-fin-var-b}, we note that $\nu_n\rightarrow \nu$ and
$\mu_n^{\sss(3)}\rightarrow \mu^{\sss(3)}\equiv \expec[(D)_3]/\expec[D]$
under the assumptions of Theorem \ref{thm-PA-fin-var-b}. For Theorem
\ref{thm-PA-fin-var-c}, we note that $\nu_n\rightarrow \nu$ under the
assumptions of Theorem \ref{thm-PA-fin-var-c}, while $\mu_n^{\sss(3)}\leq
\dmax \nu_n$.
\qed

\paragraph{Conclusion to the proof of Theorems \ref{thm-PA-inf-var}--\ref{thm-PA-inf-var-b}.}
Theorem \ref{thm-PA-inf-var} follows from the fact that $\nu_n\rightarrow
\infty$, so that the bound in \eqref{thm-1} is $O(\nu_n/\ell_n)$. Since
$\nu_n\leq \dmax$ and $\dmax=o(n)$ when $\expec[D_n]\rightarrow \expec[D]$,
we obtain that $\dtv{\Law(S_n)-\Po(\lambdanS)}=o(1)$. Since
$\nu_n\rightarrow \infty$, by the CLT,
$(\Po(\lambdanS)-\lambdanS)/\sqrt{\lambdanS}\convd Z$. Since
$\lambdanS=(\nu_n/2)(1+O(1/n))$, this completes the proof.

The proof of Theorem \ref{thm-PA-inf-var-b} is similar, now using $\lambdanM=\Theta(\nu_n^2)\rightarrow \infty$, so that
	\eqn{
	\frac{O(1)}{((\lambdanS+\lambdanM)\vee 1)}\frac{\nu_n^4+(\mu_n^{\sss(3)})^2}{\ell_n}
	\leq \frac{\nu_n^2+(\mu_n^{\sss(3)}/\nu_n)^2}{\ell_n}\leq \dmax^2/\ell_n=o(1),
	}
since we assume that $\dmax=o(\sqrt{n})$.
\qed

\subsection{Proofs for directed and bipartite configuration models}
\label{sec-dir-bi-CM-pfs}
\paragraph{Conclusion to the proof of Theorem \ref{thm-PA-dir}.}	

The proof is very similar to the proof of Theorem \ref{thm-PA-general}.
We again distinguish between
the proofs of \eqref{thm-1d}, \eqref{thm-2d} and \eqref{thm-3d}, and note
that \eqref{thm-4d} is a direct consequence of \eqref{thm-3d}. For each of
these cases, we again need to sum up the corresponding contributions (of
the couplings) in the above cases (a)-(d), but now for the directed
configuration model $\DCMnd$ (instead of $\CMnd$). 
Below, we abbreviate $ \hat{\nu}_n= \hat{\lambda}_n^{\hS}$, $\hat{\xi}_n= \hat{\lambda}_n^{\hM}$.

To prove \eqref{thm-1d}, we only need to consider the contribution due to
case (a), which now is $O(\hat{\nu}_n^2/\hat{\ell}_n)$. Again the main
contribution is when the self-loops $\alpha$ and $\beta$ are incident to
two distinct vertices $i$ and $j$, and $ \beta $ is created.
Note that $ p_{\alpha}=1/\hat{\ell}_n$. 
The first sum in \eqref{case(a)} (which was the main contribution in the
undirected model) now corresponds to
\[
  \sum_{i\neq j\in[n]}
  \frac{d_i^{\sss \rm (in)}d_i^{\sss \rm (out)}d_j^{\sss \rm (in)}d_j^{\sss
      \rm (out)}}{\hat{\ell}^{2}_n(\hat{\ell}_n-1)}=O(\hat{\nu}_n^2/\hat{\ell}_n).
\]

The contribution due to $\sum_\alpha p_\alpha^2$ equals
$O(\hat{\nu}_n/\hat{\ell}_n)=O(\hat{\nu}_n^2/\hat{\ell}_n)$, while
$\hat{\lambda}_n^{\hS}=\hat{\nu}_n(1+O(1/n))$. Thus, Theorem
\ref{thm-PA-sums} gives that
	\eqn{
	\dtv{\Law(\hat{S}_n)-\Po(\hat{\lambda}_n^{\hS})}\leq \frac{C}{(\hat{\lambda}_n^{\hS}\vee 1)}\frac{\hat{\nu}_n^2}{\hat{\ell}_n},
	}
which completes the proof of \eqref{thm-1d}.

To prove \eqref{thm-2d}, we only need to consider the contribution due to case (d), which now is $O(\frac{\mu_n^{\sss(3,\rm{in})}\mu_n^{\sss(3,\rm{out})}+\hat{\xi}_n^2}{\hat{\ell}_n})$. 
Note that $ p_{\alpha}=\frac{1}{\hat{\ell}_n(\hat{\ell}_n-1)} $.
The contribution due to $\sum_\alpha p_\alpha^2=\hat{\lambda}_n^{\hM}/\hat{\ell}_n(\hat{\ell}_n-1)$ equals $O(\hat{\xi}_n/\hat{\ell}_n^2)$. 
There are two main contributions corresponding to the two main contributions in the undirected model. 
The first main contribution corresponds to the case when $\alpha$ and   $\beta$ are incident to four different vertices, and $ \beta $ is created. Then the corresponding sum in (\ref{case(dcase1)}) is now
$$ \sum_{i\neq j\neq k\neq l \in[n]} O(\frac{(d_i^{\sss \rm (in)})_2(d_j^{\sss \rm (out)})_2(d_k^{\sss \rm (in)})_2(d_l^{\sss \rm (out)})_2}{\hat{\ell}_n^2(\hat{\ell}_n-1)^2(\hat{\ell}_n-2)}= O(\frac{\hat{\xi}_n^2}{\hat{\ell}_n}).$$
The second main contribution corresponds to the case when $\alpha$ and $ \beta $ are incident to two vertices $ i $ and $ j $, and $ \beta $ is created. Then the corresponding second sum in (\ref{case(dcase2)}) is now $ \sum_{i\neq j \in[n]} O(\frac{(d_i^{\sss \rm (in)})_3(d_j^{\sss \rm (out)})_3}{\hat{\ell}_n^2(\hat{\ell}_n-1)})=O(\frac{\mu_n^{\sss(3,\rm{in})}\mu_n^{\sss(3,\rm{out})}}{\hat{\ell}_n}).$

Thus, Theorem \ref{thm-PA-sums} gives that
	\eqn{
	\dtv{\Law(\hat{M}_n)-\Po(\hat{\lambda}_n^{\hM})}\leq \frac{C}
	{(\hat{\lambda}_n^{\hM} \vee 1)}\frac{\mu_n^{\sss(3,\rm{in})}\mu_n^{\sss(3,\rm{out})}+\hat{\xi}_n^2}{\hat{\ell}_n},
	}
which completes the proof of \eqref{thm-2d}.

To prove \eqref{thm-3d}, we need to consider the contribution due to cases (a)-(d), which now is $O(\frac{\mu_n^{\sss(3,\rm{in})}\mu_n^{\sss(3,\rm{out})}+\hat{\xi}_n^2}{\hat{\ell}_n})$. The contribution due to $\sum_\alpha p_\alpha^2=\hat{\lambda}_n^{\hS}/\hat{\ell}_n+\hat{\lambda}_n^{\hM}/\hat{\ell}_n(\hat{\ell}_n-1)$ equals $O(\hat{\nu}_n/\hat{\ell}_n)$+$O(\hat{\xi}_n/\hat{\ell}_n^2)$. Thus, Theorem \ref{thm-PA-sums} gives that
	\eqn{
	\dtv{\Law(\hat{S}_n+\hat{M}_n)-\Po(\hat{\lambda}_n^{\hS}+\hat{\lambda}_n^{\hM})}\leq \frac{C}{((\hat{\lambda}_n^{\hS}+\hat{\lambda}_n^{\hM})\vee 1)}\frac{\mu_n^{\sss(3,\rm{in})}\mu_n^{\sss(3,\rm{out})}+\hat{\xi}_n^2}{\hat{\ell}_n},
	}
which completes the proof of \eqref{thm-3d}, and thus of Theorem \ref{thm-PA-dir}.
\qed

\paragraph{Conclusion to the proof of Theorem \ref{thm-PA-bip}.}

The proof is again very similar to the proof of Theorem \ref{thm-PA-general} and
that of Theorem \ref{thm-PA-dir}. However, there are no self-loops in the
bipartite configuration model. Thus, we only need to consider case (d)
above (regarding the couplings for the multiple edges), but now for the
bipartite configuration model $\BCMnd$. 
Again there are two main contributions corresponding to the main
contributions for the undirected configuration model $\CMnd$. 

Note that $ p_{\alpha}=\frac{1}{\bar{\ell}_n(\bar{\ell}_n-1)} $.
The corresponding sum in (\ref{case(dcase1)}) is now
	$$ 
	\sum_{i,k\in [n^{\sss \rm(l)}], j,l \in[n^{\sss \rm(r)}]} O(\frac{(d_i^{\sss\rm (l)})_2(d_j^{\sss\rm (r)})_2(d_k^{\sss\rm (l)})_2(d_l^{\sss\rm (r)})_2}
	{\bar{\ell}_n^2(\bar{\ell}_n-1)^2(\bar{\ell}_n-2)}= O\Big(\frac{(\bar{\lambda}_n^{\bM})^2}{\bar{\ell}_n}\Big).
	$$
The corresponding second sum in (\ref{case(dcase2)}) is now
\[
  \sum_{i\in
  [n^{\sss \rm(l)}], j\in[n^{\sss \rm(r)}]} O(\frac{(d_i^{\sss\rm
    (l)})_3(d_j^{\sss\rm
    (r)})_3}{\bar{\ell}_n^2(\bar{\ell}_n-1)})=O(\frac{\mu_n^{\sss(3,\rm{l})}\mu_n^{\sss(3,\rm{r})}}{\bar{\ell}_n}).
\]

Thus, Theorem \ref{thm-PA-sums} gives that
	\eqn{
	\dtv{\Law(\bar{M}_n)-\Po(\bar{\lambda}_n^{\bM})}\leq \frac{C}
	{(\bar{\lambda}_n^{\bM} \vee 1)}\frac{\mu_n^{\sss(3,\rm{l})}\mu_n^{\sss(3,\rm{r})}+(\bar{\lambda}_n^{\bM})^2}{\bar{\ell}_n},
	}
which completes the proof of \eqref{thm-2b}, and thus of Theorem \ref{thm-PA-bip}.
\qed
\medskip

\paragraph{Acknowledgements.}
The work of OA is supported by NSERC, as well as by the Isaac Newton
Institute and the Simons Foundation.
The work of RvdH is supported by the Netherlands Organisation for
Scientific Research (NWO) through VICI grant 639.033.806 and the
Gravitation {\sc Networks} grant 024.002.003.  The work of CH is supported
by a grant from the Swedish Research Council.  This research was initiated
during the workshop ``Probability, Combinatorics and Geometry'' at the
Bellairs Research Institute in Barbados, April 2014.  
We would like to give special thanks to Konstantinos Panagiotou for many stimulating discussions during this workshop. We would also like to thank the Bellairs Research Institute as well as the organizers of this workshop, 
Louigi Addario-Berry, Nicolas Broutin, Luc Devroye, Vida Dujmovic and G\'abor
Lugosi.

\bibliographystyle{plainnat}

\def\cprime{$'$}


\begin{thebibliography}{24}
\providecommand{\natexlab}[1]{#1}
\providecommand{\url}[1]{\texttt{#1}}
\expandafter\ifx\csname urlstyle\endcsname\relax
  \providecommand{\doi}[1]{doi: #1}\else
  \providecommand{\doi}{doi: \begingroup \urlstyle{rm}\Url}\fi

\bibitem[Albert and Barab{\'a}si((2002))]{AlbBar02}
R.~Albert and A.-L. Barab{\'a}si.
\newblock Statistical mechanics of complex networks.
\newblock \emph{Rev. Modern Phys.}, {\bf 74}\penalty0 (1):\penalty0 47--97,
  (2002).
\newblock ISSN 0034-6861.

\bibitem[Alon and Spencer((2000))]{AloSpe00}
N.~Alon and J.~Spencer.
\newblock \emph{The probabilistic method}.
\newblock Wiley-Interscience Series in Discrete Mathematics and Optimization.
  John Wiley \& Sons, New York, second edition, (2000).

\bibitem[Barbour et~al.((1992))Barbour, Holst, and Janson]{BarHolJan92}
A.~Barbour, L.~Holst, and S.~Janson.
\newblock \emph{Poisson approximation}, volume~2 of \emph{Oxford Studies in
  Probability}.
\newblock The Clarendon Press Oxford University Press, New York, (1992).

\bibitem[Bender and Canfield((1978))]{BenCan78}
E.A. Bender and E.R. Canfield.
\newblock The asymptotic number of labelled graphs with given degree sequences.
\newblock \emph{Journal of Combinatorial Theory (A)}, {\bf 24}:\penalty0
  296--307, (1978).

\bibitem[Blanchet and Stauffer((2013))]{BlaSta13}
J.~Blanchet and A.~Stauffer.
\newblock Characterizing optimal sampling of binary contingency tables via the
  configuration model.
\newblock \emph{Random Structures Algorithms}, {\bf 42}\penalty0 (2):\penalty0
  159--184, (2013).


\bibitem[Bollob{\'a}s((1980))]{Boll80b}
B.~Bollob{\'a}s.
\newblock A probabilistic proof of an asymptotic formula for the number of
  labelled regular graphs.
\newblock \emph{European J. Combin.}, {\bf 1}\penalty0 (4):\penalty0 311--316,
  (1980).

\bibitem[Bollob{\'a}s((2001))]{Boll01}
B.~Bollob{\'a}s.
\newblock \emph{Random graphs}, volume~{\bf 73} of \emph{Cambridge Studies in
  Advanced Mathematics}.
\newblock Cambridge University Press, Cambridge, second edition, (2001).

\bibitem[Britton et~al.((2006))Britton, Deijfen, and
  Martin-L{\"o}f]{BriDeiMar-Lof05}
T.~Britton, M.~Deijfen, and A.~Martin-L{\"o}f.
\newblock Generating simple random graphs with prescribed degree distribution.
\newblock \emph{J. Stat. Phys.}, {\bf 124}\penalty0 (6):\penalty0 1377--1397,
  (2006).


\bibitem[Cooper and Frieze((2004))]{CooFri04}
C.~Cooper and A.~Frieze.
\newblock The size of the largest strongly connected component of a random
  digraph with a given degree sequence.
\newblock \emph{Combin. Probab. Comput.}, {\bf 13}\penalty0 (3):\penalty0
  319--337, (2004).


\bibitem[Gao and Wormald((2016))]{GaoWor14}
P.~Gao and N.~Wormald.
\newblock Enumeration of graphs with a heavy-tailed degree sequence.
\newblock \emph{Adv. Math.}, {\bf 287}:\penalty0 412--450, (2016).


\bibitem[Hofstad((2017))]{Hofs17}
Remco~van~der Hofstad.
\newblock \emph{Random graphs and complex networks. Volume 1}.
\newblock Cambridge Series in Statistical and Probabilistic Mathematics.
  Cambridge University Press, Cambridge, (2017).

\bibitem[Holmgren and Janson((2014))]{HolJan14}
C.~Holmgren and S.~Janson.
\newblock Using {S}tein's method to show {P}oisson and normal limit laws for
  fringe subtrees.
\newblock In \emph{Discrete Mathematics \& Theoretical Computer Science},
  Proceedings BA, pages 169--180, (2014).

\bibitem[Holmgren and Janson((2015))]{HolJan15}
C.~Holmgren and S.~Janson.
\newblock Limit laws for functions of fringe trees for binary search trees and
  random recursive trees.
\newblock \emph{Electron. J. Probab.}, {\bf 20}:\penalty0 no. 4, 51, (2015).


\bibitem[Hoorn and Litvak((2015))]{HooLit15}
P.~van~der Hoorn and N.~Litvak.
\newblock Upper bounds for number of removed edges in the erased configuration
  model.
\newblock In \emph{Algorithms and models for the web graph}, volume~{\bf 9479}
  of \emph{Lecture Notes in Comput. Sci.}, pages 54--65. Springer, Cham,
  (2015).

\bibitem[Janson((2009))]{Jans06b}
S.~Janson.
\newblock {The probability that a random multigraph is simple}.
\newblock \emph{Combinatorics, Probability and Computing}, {\bf 18}\penalty0
  (1-2):\penalty0 205--225, (2009).

\bibitem[Janson((2014))]{Jans13a}
S.~Janson.
\newblock The probability that a random multigraph is simple. {II}.
\newblock \emph{J. Appl. Probab.}, {\bf 51}A\penalty0 (Celebrating 50 Years of
  The Applied Probability Trust):\penalty0 123--137, (2014).

\bibitem[Janson et~al.((2014))Janson, Luczak, and Windridge]{JanLucWin14}
S.~Janson, M.~Luczak, and P.~Windridge.
\newblock Law of large numbers for the sir epidemic on a random graph with
  given degrees.
\newblock \emph{Random Structures \& Algorithms}, {\bf 45}\penalty0
  (4):\penalty0 724--761, (2014).

\bibitem[McKay and Wormald((1991))]{MckWor91}
B.D. McKay and N.C. Wormald.
\newblock {Asymptotic enumeration by degree sequence of graphs with degrees
  $o(n^{1/2})$}.
\newblock \emph{Combinatorica}, {\bf 11}\penalty0 (4):\penalty0 369--382,
  (1991).

\bibitem[Molloy and Reed((1995))]{MolRee95}
M.~Molloy and B.~Reed.
\newblock A critical point for random graphs with a given degree sequence.
\newblock \emph{Random Structures Algorithms}, {\bf 6}\penalty0 (2-3):\penalty0
  161--179, (1995).

\bibitem[Molloy and Reed((1998))]{MolRee98}
M.~Molloy and B.~Reed.
\newblock The size of the giant component of a random graph with a given degree
  sequence.
\newblock \emph{Combin. Probab. Comput.}, {\bf 7}\penalty0 (3):\penalty0
  295--305, (1998).

\bibitem[Newman((2003){\natexlab{a}})]{Newm03a}
M.~E.~J. Newman.
\newblock The structure and function of complex networks.
\newblock \emph{SIAM Rev.}, {\bf 45}\penalty0 (2):\penalty0 167--256
  (electronic), (2003){\natexlab{a}}.

\bibitem[Newman((2003){\natexlab{b}})]{Newm03b}
M.~E.~J. Newman.
\newblock Random graphs as models of networks.
\newblock In \emph{Handbook of graphs and networks}, pages 35--68. Wiley-VCH,
  Weinheim, (2003){\natexlab{b}}.

\bibitem[Newman et~al.((2000))Newman, Strogatz, and Watts]{NewStrWat00}
M.~E.~J. Newman, S.~Strogatz, and D.~Watts.
\newblock Random graphs with arbitrary degree distribution and their
  application.
\newblock \emph{Phys.\ Rev.\ E}, {\bf 64}:\penalty0 026118, 1--17, (2000).

\bibitem[Wormald((1981))]{Worm81a}
N.~C. Wormald.
\newblock The asymptotic distribution of short cycles in random regular graphs.
\newblock \emph{J. Combin. Theory Ser. B}, {\bf 31}\penalty0 (2):\penalty0
  168--182, (1981).
\end{thebibliography}

\end{document}